\documentclass[12pt, reqno]{amsart}

\title{Representations of cones and applications to decision theory}

\usepackage[T1]{fontenc}
\usepackage{amsmath}
\usepackage{amssymb}
\usepackage{amsthm}
\usepackage[left=3.5cm, right=3.5cm, top=3.2cm, bottom=3.2cm]{geometry}
\usepackage{hyperref}
\usepackage{fancyhdr}
\usepackage{enumitem}
\usepackage{comment}
\usepackage{nicefrac}
\usepackage{bm}
\usepackage{mathrsfs}
\usepackage{graphicx}
\usepackage[utf8]{inputenc}
\usepackage{cancel}
\usepackage{mathtools}
\usepackage{tikz}
\usetikzlibrary{cd,decorations.pathreplacing,matrix,arrows,positioning,automata,shapes,shadows,calc,fadings,decorations,snakes,through,intersections}
\usepackage{float}

\AtBeginDocument{%
   \def\MR#1{}
}

\newtheorem{thm}{Theorem}[section]
\newtheorem{cor}[thm]{Corollary}
\newtheorem{lem}[thm]{Lemma}
\newtheorem{prop}[thm]{Proposition}

\theoremstyle{definition} 
\newtheorem{defi}[thm]{Definition}
\let\olddefi\defi
\renewcommand{\defi}{\olddefi\normalfont}
\let\oldquestion\question
\renewcommand{\question}{\oldquestion\normalfont}
\newtheorem{example}[thm]{Example}
\let\oldexample\example
\renewcommand{\example}{\oldexample\normalfont}
\newtheorem{rmk}[thm]{Remark}
\let\oldrmk\rmk
\renewcommand{\rmk}{\oldrmk\normalfont}

\usepackage{todonotes}

\pagestyle{fancy}
\fancyhf{}
\fancyhead[CO]
{\textsc{Representations of cones and applications}}
\fancyhead[CE]
{\textsc{Paolo Leonetti} and \textsc{Giulio Principi}}
\fancyhead[RO,LE]{\thepage}

\setlength{\headheight}{12pt}

\hypersetup{
    pdftitle={Prova},
    pdfauthor={},
    pdfmenubar=false,
    pdffitwindow=true,
    pdfstartview=FitH,
    colorlinks=true,
    linkcolor=blue,
    citecolor=green,
    urlcolor=cyan
}

\uchyph=0

\providecommand{\MR}[1]{}

\providecommand{\MR}{\relax\ifhmode\unskip\space\fi MR }

\providecommand{\href}[2]{#2}

\begin{document}

\author[P.~Leonetti]{Paolo Leonetti}
\address[P.~Leonetti]{Department of Decision Sciences, Universit\`a Luigi Bocconi, via Roentgen 1, Milan 20136, Italy}
\email{leonetti.paolo@gmail.com}

\author[G.~Principi]{Giulio Principi}
\address[G.~Principi]{Department of Economics, New York University, 19 West 4th Street, New York 10003, USA}
\email{gp2187@nyu.edu}

\thanks{P.~Leonetti is grateful to PRIN 2017 (grant 2017CY2NCA) for financial support. G.~Principi is grateful to MacCracken Fellowship.}

\keywords{Representation of cones; Bipolar theorem; bidual cone; dual pair; multi-utility representation.}
\subjclass[2020]{Primary: 46A20, 46B20; Secondary: 46A22, 46B10.}

\begin{abstract} 
\noindent 
Let $C$ be a cone in a locally convex Hausdorff topological vector space $X$ containing $0$. 
We show that there exists a (essentially unique) nonempty family $\mathscr{K}$ of nonempty subsets of the topological dual $X^\prime$ such that 
$$
C=\{x \in X: \forall K \in \mathscr{K}, \exists f \in K, \,\, f(x) \ge 0\}.
$$
Then, we identify the additional properties on the family $\mathscr{K}$ which characterize, among others, closed convex cones, open convex cones, closed cones, and convex cones. 
For instance, if $X$ is a Banach space, then $C$ is a closed cone if and only if the family $\mathscr{K}$ can be chosen with nonempty convex compact sets. 

These representations provide abstract versions of several recent results in decision theory and give us the proper framework to obtain new ones. 
This allows us to characterize preorders which satisfy the independence axiom over certain probability measures, answering an open question in [Econometrica~\textbf{87} (2019), no. 3, 933--980].
\end{abstract}
\maketitle
\thispagestyle{empty}

%
\section{Introduction}\label{sec:intro}

The study of the geometry of cones and their characterization is central in many fields of pure and applied mathematics. 
For instance, they are related to the algebraic and topological properties of ordered normed vector spaces, Riesz spaces, and their linear operators, see e.g. \cite{
MR2262133}. 
They appear surprisingly also in the theory of Banach spaces: e.g., Milman and Milman proved in \cite{MR0171152} that a Banach space 
is not reflexive if and only if it contains a copy of the positive cone of $\ell_1$, cf. also \cite{MR2754997, MR2577803}; and, with a different behaviour, a Banach space $X$ contains a copy of $c_0$ if and only if it contains a copy the positive cone of $c_0$, see \cite{MR2861605}. 
In mathematical economics, 
the geometry of ordering cones is crucial in general equilibrium theory \cite{MR1075992} and in decision theory, where they have been implicitly used to characterize certain type of preferences between lotteries \cite{Dubra_et_al, MR3182925, MR3957335}, building on the idea of Aumann's cone \cite{MR174381}. 
In addition, cones appeared in vector-valued convex optimization \cite{MR1994718, MR1116766}  
and in finance to characterize the no-arbitrage property \cite{MR3822757}. 
For several other cones-related results, see e.g. \cite{MR4223027, MR3365863, MR4151084, MR1666566, MR4273125}. 

The main result of this work is to provide a characterization of cones $C$ in arbitrary 
locally convex 
Hausdorff 
topological vector spaces $X$, and to show that such representation is unique in a precise sense, see Theorem \ref{thm:dualconesgeneralized} and \ref{thm:uniqueness} in Section~\ref{sec:mainresults}. The representation will be of the type: \textquotedblleft There exists a nonempty family $\mathscr{K}$ of nonempty subsets of the dual $X^\prime$ such that $x\in C$ if and only if
$$
\forall K \in \mathscr{K}, \exists f\in K,\,\, f(x) \ge 0. \text{\textquotedblright}
$$
A 
result on the same spirit 
appeared in \cite[Theorem 1.b]{MR3566440}, where Nishimura and Ok proved that, if $\mathcal{R}$ is a binary relation on a metric space $S$ (that is, $\mathcal{R}\subseteq S^2$), then $\mathcal{R}$ is reflexive if and only if there exists a nonempty family $\mathscr{U}$ of nonempty subsets of $\mathrm{C}(S):=\{u: S\to \mathbf{R} \text{ continuous}\}$ such that, for each $a,b \in S$, 
we have 
$(a,b) \in \mathcal{R}$ if and only if
\begin{equation}\label{eq:nishimuraok}
\forall U \in \mathscr{U}, \exists u \in U, \quad u(a) \ge u(b).
\end{equation}
Characterizations analogous to \eqref{eq:nishimuraok} appeared in \cite[Theorem 1]{MR3957335} in the context of preferences over lotteries and in \cite[Theorem 2]{MR2888839} for preferences over Anscombe--Aumann acts. 
On a different direction, Hausner and Wendel proved in \cite{MR52045} that every totally ordered vector space is isomorphic to a certain subset of some lexicographic-ordered function space.

Then, we proceed to Section \ref{sec:additionalproperties} where we characterize cones with additional properties. More precisely, our main results provide representations for:
\begin{enumerate}[label={\rm (\roman{*})}]
\item cones $C\subseteq X$ such that $C\cup (-C)=X$ (Theorem \ref{thm:propertycompleteness}); 
\item convex cones (Theorem \ref{thm:convexcones});
\item closed convex cones and their interiors (Theorem \ref{thm:dualdualcone});
\item open convex cones, provided that $X$ is normed (Proposition \ref{prop:openconvex}); 
\item closed cones, provided that $X$ is Banach (Theorem \ref{thm:closedcone}).
\end{enumerate}

These characterizations provide abstract versions of several results in decision theory contained, e.g., in \cite{Dubra_et_al, MR2398816, MR3182925, MR3957335}. 
In addition, we conclude in Section \ref{sec:appl} with some applications. 
In Corollary \ref{cor:firstresultok} we recover the first main result in \cite{MR3957335}. 
Furthermore, in such context, we characterize the additional structure inherited from the transitivity axiom, which answers an open question in \cite{MR3957335}, see Corollary \ref{cor:transitivity}. 
Lastly, we characterize reflexive binary relations which satisfy the independence axiom in the context of Anscombe--Aumann acts, providing a partial answer to another open question in \cite{MR3957335}, see Corollary \ref{cor:AnscombeAumann}.




\section{Main Results}\label{sec:mainresults}
Given a topological vector space (tvs) $X$ over the real field, we denote by $X^\prime$ its topological dual. 
A nonempty subset $C\subseteq X$ is said to be a \emph{cone} if $\lambda C\subseteq C$ for all positive reals $\lambda > 0$. A cone $C\subseteq X$ is said to be \emph{pointed} if $0\in C$, otherwise it is called \emph{blunted}. 
Hence, both $X$ and $\{0\}$ are pointed convex cones, and $X$ is the unique open pointed cone. 
Let $\mathscr{C}(X)$ be the family of closed convex cones of $X$. 
Lastly, given a nonempty set $S\subseteq X$, let $\mathrm{cone}(S):=\{\lambda x: \lambda>0, x \in S\}$ be the smallest cone containing $S$, and $\mathrm{co}(S)$ and $\overline{\mathrm{co}}(S)$ be the convex hull and closed convex hull of $S$, respectively. 

A pair of vector spaces $(X,Y)$ is a \emph{dual pair} if there exists a bilinear map $\langle \cdot,\cdot \rangle: X\times Y\to \mathbf{R}$ such that the families $\{\langle \cdot, y\rangle: y \in Y\}$ and $\{\langle x, \cdot\rangle: x \in X\}$ separate the points of $X$ and $Y$, respectively. Such map $\langle \cdot,\cdot \rangle$ is called \emph{duality}. 
Unless otherwise stated, $X$ and $Y$ are endowed with the weak topology $\sigma(X,Y)$ and the weak$^\star$ topology $\sigma(Y,X)$, respectively. 
Accordingly, it is well known that 
both $X$ and $Y$ are locally convex Hausdorff tvs, and that $X^\prime=Y$ and $Y^\prime=X$, 
up to isomorphisms; 
conversely, if $X$ is a 
locally convex Hausdorff 
tvs, then $(X,X^\prime)$ is a dual pair, 
see e.g. \cite[Section 5.14]{MR2378491} or \cite[Section 8.2]{MR2317344}. 

\emph{Hereafter, 
unless otherwise stated, 
we refer always implicitly to the dual pair $(X,Y)$ with $Y=X^\prime$. E.g., we write \textquotedblleft a subset $S\subseteq X$ is closed\textquotedblright\ for \textquotedblleft a subset $S\subseteq X$ is closed in the weak topology  $\sigma(X,Y)$.\textquotedblright}

We present now the main definition of this work. 
\begin{defi}\label{defi:base}
Given a nonempty family $\mathscr{K}\subseteq \mathcal{P}(Y)$ of nonempty sets, 
we say that a cone $C\subseteq X$ \emph{is represented by $\mathscr{K}$} if $C=C_{\mathscr{K}}$, where
\begin{equation}\label{eq:claimedequalityconedefi}
C_{\mathscr{K}}:=\left\{x \in X: \forall K \in \mathscr{K}, \exists y \in K, \,\, \langle x,y\rangle \ge 0\right\}. 
\end{equation}
By convention, we assume that $X$ is represented by 
$\{\emptyset\}$. 
\end{defi}
Clearly, if the cone $C$ admits such a representation, then $C$ is pointed. 
Conversely, as we are going to show in a moment every pointed cone is represented by some family $\mathscr{K}$. Note that, if $C$ is blunted, then $X\setminus C$ is a pointed cone; hence, passing to the complement or simply subtracting $0$, the pointedness assumption is not restrictive of generality. 

At this point, given a pointed cone $C\subseteq X$, define 
$$
\mathscr{K}(C):=\{G_x: x\notin C\},
$$
where
$$
\forall x \in X, \quad G_x:=\{y \in Y: \langle x,y\rangle <0\}.
$$
Observe that, for each nonzero $x \in X$, $G_x$ is a nonempty open convex blunted cone (hence, relative to the weak$^\star$ topology on $Y$). 
In addition, $G_{x_1}=G_{x_2}$, for some nonzero $x_1,x_2 \in X$ if and only if $\mathrm{cone}(\{x_1\})=\mathrm{cone}(\{x_2\})$. Indeed, in the opposite, $\{x_1,x_2\}$ would be a linearly independent family and, if $\mathscr{B}$ is an Hamel basis of $X$ containing $\{x_1,x_2\}$, then the unique functional $y \in Y$ supported on $\mathrm{span}\{x_1,x_2\}$ and such that  $\langle x_1,y\rangle=-\langle x_2,y\rangle=-1$ belongs to $G_{x_1}\setminus G_{x_2}$. Lastly, given pointed cones $C_1,C_2\subseteq X$, we have $C_1\subseteq C_2$ if and only if $\mathscr{K}(C_2)\subseteq \mathscr{K}(C_1)$. 

Our first main result follows:
\begin{thm}\label{thm:dualconesgeneralized}
Let $C\subseteq X$ be a pointed cone. 
Then $C$ is represented by $\mathscr{K}(C)$. 
\end{thm}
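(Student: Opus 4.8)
The plan is to prove the set equality $C = C_{\mathscr{K}(C)}$ by double inclusion, where $\mathscr{K}(C) = \{G_x : x \notin C\}$ and $G_x = \{y \in Y : \langle x, y\rangle < 0\}$. Note first that if $C = X$ then $\mathscr{K}(C) = \{G_x : x \notin X\} = \emptyset$, which by the stated convention is not the representing family; but actually $\mathscr{K}(X) = \emptyset$ as a family of sets, and $C_{\emptyset} = \{x \in X : \forall K \in \emptyset, \dots\} = X$ vacuously, so the claim holds in this degenerate case. Hereafter assume $C \neq X$, so $\mathscr{K}(C)$ is a nonempty family of sets; each $G_x$ for $x \notin C$ is nonempty since $x \notin C$ forces $x \neq 0$ (as $C$ is pointed), and a nonzero vector in a Hausdorff locally convex space is separated from $0$ by some continuous functional, so some $y$ has $\langle x, y\rangle < 0$. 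Thus $\mathscr{K}(C)$ is an admissible family for Definition \ref{defi:base}.

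For the inclusion $C \subseteq C_{\mathscr{K}(C)}$: fix $z \in C$ and an arbitrary $K \in \mathscr{K}(C)$, say $K = G_x$ with $x \notin C$. I must produce $y \in G_x$ with $\langle z, y\rangle \ge 0$. The key observation is that if no such $y$ existed, then every $y$ with $\langle x, y\rangle < 0$ would also satisfy $\langle z, y\rangle < 0$; equivalently $G_x \subseteq G_z$, and by the remarks in the excerpt (specifically, $G_{x_1} \subseteq G_{x_2}$ relates to the cones generated — more precisely $G_x \subseteq G_z$ forces $z \in \mathrm{cone}(\{x\})$ by an argument dual to the one given for equality: if $z$ and $x$ were linearly independent one builds a functional in $G_x \setminus G_z$, and if $z \in \mathrm{cone}(\{-x\}) \cup \{0\}$ one also gets a separating functional) we would get $z = \lambda x$ for some $\lambda > 0$. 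But then $z \in C$ and $C$ a cone gives $x = \lambda^{-1} z \in C$, contradicting $x \notin C$. Hence some $y \in G_x$ has $\langle z, y\rangle \ge 0$, and since $K$ was arbitrary, $z \in C_{\mathscr{K}(C)}$.

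For the reverse inclusion $C_{\mathscr{K}(C)} \subseteq C$, I argue by contraposition: suppose $z \notin C$. If $z = 0$ then $z \in C$ since $C$ is pointed, contradiction; so $z \neq 0$, hence $G_z \neq \emptyset$ and $G_z \in \mathscr{K}(C)$ by definition of $\mathscr{K}(C)$. Now take $K = G_z$ in the definition of $C_{\mathscr{K}(C)}$: for $z$ to lie in $C_{\mathscr{K}(C)}$ we would need some $y \in G_z$ with $\langle z, y\rangle \ge 0$, but $y \in G_z$ means precisely $\langle z, y\rangle < 0$ — a contradiction. Therefore $z \notin C_{\mathscr{K}(C)}$, completing the proof.

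I expect the main obstacle to be the implication ``$G_x \subseteq G_z \implies z \in \mathrm{cone}(\{x\})$'' used in the first inclusion, which is the genuinely geometric/linear-algebraic step. The excerpt establishes the analogous statement for equality $G_{x_1} = G_{x_2}$ via a Hamel basis and an explicit separating functional; I would adapt that construction to the one-sided inclusion, splitting into the cases where $\{x, z\}$ is linearly independent (build $y$ supported on $\mathrm{span}\{x,z\}$ with $\langle x, y\rangle = -1$ and $\langle z, y\rangle = +1$, so $y \in G_x \setminus G_z$) and where $z = \mu x$ for some $\mu \le 0$ (if $\mu < 0$, any $y$ with $\langle x, y\rangle < 0$ has $\langle z, y\rangle = \mu\langle x,y\rangle > 0$, so again $G_x \not\subseteq G_z$; if $\mu = 0$ then $z = 0 \in C$, already excluded). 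Everything else is routine bookkeeping with the definitions and the Hahn--Banach separation of a nonzero vector from the origin.
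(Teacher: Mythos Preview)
Your proof is correct. The inclusion $C_{\mathscr{K}(C)}\subseteq C$ is handled exactly as in the paper (taking $K=G_z$ for $z\notin C$ and noting that no $y\in G_z$ can satisfy $\langle z,y\rangle\ge 0$). For the other inclusion $C\subseteq C_{\mathscr{K}(C)}$, however, you take a different route: the paper fixes $x\in C$ and $x_0\notin C$, applies the Strong Separating Hyperplane Theorem to the closed ray $C_0:=\{\lambda x:\lambda\ge 0\}$ and the point $x_0$, and reads off a $y\in G_{x_0}$ with $\langle x,y\rangle\ge 0$ directly. You instead suppose no such $y$ exists, deduce $G_x\subseteq G_z$, and rule this out by a linear-dependence case split, using only that $Y$ separates the points of $X$. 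Your argument is more elementary---it bypasses the separation theorem entirely in favour of a two-dimensional construction---while the paper's is shorter and invokes a standard black box.

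One small caveat: the phrase ``build $y$ supported on $\mathrm{span}\{x,z\}$'' (borrowed from the paper's own preamble) suggests a Hamel-basis construction, which produces only an \emph{algebraic} functional. What you actually need, and what makes the step valid, is that in a dual pair the map $Y\to (\mathrm{span}\{x,z\})^*$ is surjective whenever $\{x,z\}$ is linearly independent; this follows from $Y$ separating points. With that understood, the construction of $y\in Y$ with $\langle x,y\rangle=-1$ and $\langle z,y\rangle=1$ is legitimate. Also, your ``already excluded'' for the subcase $z=0$ is slightly off (you never excluded $z=0$ from $C$), but the case is trivial anyway: if $z=0$ then any $y\in G_x$ satisfies $\langle z,y\rangle=0\ge 0$.
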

 
\begin{proof}
The claim holds if $C=X$, hence suppose hereafter that $C$ is proper. 
Set $D:=C_{\mathscr{K}(C)}$ and suppose that there exists a vector $x \in D\setminus C$. Since $x\notin C$, then $G_x$ belongs to $\mathscr{K}(C)$. Given that $x \in D$, there exists $y \in G_x$ such that $\langle x,y\rangle \ge 0$. However, this contradicts the definition of $G_x$. Therefore 
$D\subseteq C$. 

Conversely, pick two vectors $x \in C$ and $x_0\notin C$, and define the closed convex cone $C_0:=\{\lambda x: \lambda \ge 0\}$. Since $C$ is a pointed cone, then $C_0\subseteq C$, hence $x_0\notin C_0$. 
Thanks to the 
Strong Separating Hyperplane Theorem, 
see e.g. \cite[Theorem 8.17]{MR2317344}, 
there exists a nonzero $y \in Y$ strongly separating $C_0$ and $x_0$, meaning explicitly that
$$
\exists \alpha \in \mathbf{R}, \forall \lambda \ge 0, \quad 
\langle \lambda x, y\rangle \ge \alpha > \langle x_0,y\rangle.
$$
However, this is possible only if $\alpha \le 0$, so that $\langle x_0,y\rangle <0$. 
Observe that the value $\langle x,y\rangle$ cannot be negative: indeed, in such case, this would contradict the inequality $\langle \lambda x,y\rangle>\langle x_0,y\rangle$ with $\lambda=2\langle x_0,y\rangle / \langle x,y\rangle$.  
Hence $y \in G_{x_0} \in \mathscr{K}(C)$ and $\langle x,y\rangle \ge 0$, which proves that $C\subseteq D$. 
\end{proof}

The representation of a cone $C$ is certainly not unique. 
As an extreme 
case, the whole space $X$ is represented by the singleton $\{\{y,-y\}\}$  for every $y \in Y$. This motivates the following definition: 
\begin{defi}\label{defi:trivial}
A nonempty set $K\subseteq Y$ is said to be \emph{trivial} if 
$$
\forall x \in X, \exists y \in K, \,\,
\langle x,y\rangle \ge 0.
$$
The family of trivial sets is denoted by $\mathscr{K}_{\sharp}$. 
\end{defi}
Equivalently, $X$ is represented by a family $\mathscr{K}$ if and only if $\mathscr{K}\subseteq \mathscr{K}_\sharp$. 
Note that $\mathscr{K}_\sharp$ is an upward directed family which is not a filter: indeed both $\{\{0\}\}$ and $\{Y\setminus \{0\}\}$ belong to $\mathscr{K}_\sharp$, while $\mathscr{K}_\sharp \neq \mathcal{P}(Y)$. 
In addition, the definition of trivial set is strictly related to the notion of one-sided K\"{o}the polar, so that $K \in \mathscr{K}_\sharp$ if and only if $\{x \in X: \forall y \in K, \,\, \langle x,y\rangle <0\}=\emptyset$, cf. \cite[p. 215]{MR2378491}. 
Note also that, for each nonzero $x\in X$, $G_x$ is not trivial in the sense of Definition \ref{defi:trivial} since $x \notin C_{\{G_x\}}$, which implies that 
$$
\mathscr{K}(C) \cap \mathscr{K}_\sharp=\emptyset
$$
for all cones $C\subseteq X$. 

Then, we have the following easy [non-]uniqueness representation, cf. Theorem \ref{thm:uniqueness} below, which we state mainly for practical purposes: 
\begin{cor}\label{cor:easyrepresentation}
Let $C\subseteq X$ be a proper pointed cone, and $\mathscr{K}\subseteq \mathcal{P}(Y)$ be a nonempty family such that 
$
\mathscr{K}(C)=\{\mathrm{co}\left(\mathrm{cone}(K)\right): K \in \mathscr{K}\setminus \mathscr{K}_\sharp\}.
$ 
Then $C$ is represented by $\mathscr{K}$. 
\end{cor}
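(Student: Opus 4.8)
The plan is to reduce everything to Theorem~\ref{thm:dualconesgeneralized}, which already gives $C = C_{\mathscr{K}(C)}$. Thus it suffices to prove that $C_{\mathscr{K}} = C_{\mathscr{K}(C)}$, and I will obtain this by checking that two operations on a representing family leave the represented cone unchanged: discarding its trivial members, and replacing each remaining member $K$ by $\mathrm{co}(\mathrm{cone}(K))$.

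First I would dispose of the trivial sets. By Definition~\ref{defi:trivial}, if $K \in \mathscr{K}_\sharp$ then the clause ``$\exists y \in K$, $\langle x, y\rangle \ge 0$'' in \eqref{eq:claimedequalityconedefi} holds for every $x \in X$, so such a $K$ imposes no constraint; hence $C_{\mathscr{K}} = C_{\mathscr{K} \setminus \mathscr{K}_\sharp}$. For this to be a legitimate instance of Definition~\ref{defi:base} I also need $\mathscr{K} \setminus \mathscr{K}_\sharp \neq \emptyset$: otherwise the hypothesis on $\mathscr{K}$ would force $\mathscr{K}(C) = \emptyset$, whereas $C$ proper yields some $x \notin C$ and therefore $G_x \in \mathscr{K}(C)$. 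This is the only spot where properness of $C$ is used.

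The core step is the elementary observation that, for every nonempty $K \subseteq Y$ and every $x \in X$, one has $\langle x, y\rangle \ge 0$ for some $y \in K$ if and only if $\langle x, y\rangle \ge 0$ for some $y \in \mathrm{co}(\mathrm{cone}(K))$. The ``only if'' direction is immediate from $K \subseteq \mathrm{cone}(K) \subseteq \mathrm{co}(\mathrm{cone}(K))$. For ``if'', write a point of $\mathrm{co}(\mathrm{cone}(K))$ pairing nonnegatively with $x$ as a finite convex combination $\sum_i t_i \lambda_i z_i$ with $t_i \ge 0$, $\sum_i t_i = 1$, $\lambda_i > 0$, $z_i \in K$; expanding by bilinearity, $\sum_i t_i \lambda_i \langle x, z_i\rangle \ge 0$ with all coefficients $t_i \lambda_i \ge 0$ not all zero, so $\langle x, z_i\rangle \ge 0$ for at least one $i$. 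Applying this claim to each member of $\mathscr{K} \setminus \mathscr{K}_\sharp$ shows that the family of constraints defining $C_{\mathscr{K} \setminus \mathscr{K}_\sharp}$ coincides with the one defining $C_{\{\mathrm{co}(\mathrm{cone}(K)) : K \in \mathscr{K} \setminus \mathscr{K}_\sharp\}}$, which by hypothesis equals $C_{\mathscr{K}(C)}$. Chaining the equalities, $C_{\mathscr{K}} = C_{\mathscr{K} \setminus \mathscr{K}_\sharp} = C_{\mathscr{K}(C)} = C$, which is the assertion.

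I do not expect a genuine obstacle: the argument is a short bookkeeping of constraints plus the convex-combination observation above. The one point to be careful about is precisely the nonemptiness of $\mathscr{K} \setminus \mathscr{K}_\sharp$, so that the intermediate object $C_{\mathscr{K} \setminus \mathscr{K}_\sharp}$ is not accidentally the conventional value $X$ attached to the empty family in Definition~\ref{defi:base}.
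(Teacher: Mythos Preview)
Your proof is correct and follows essentially the same approach as the paper: both remove trivial members first and then use the convex-combination observation that $\langle x,\cdot\rangle\ge 0$ meets $K$ if and only if it meets $\mathrm{co}(\mathrm{cone}(K))$, invoking Theorem~\ref{thm:dualconesgeneralized} to conclude. Your packaging of that observation as a standalone equivalence is slightly cleaner than the paper's two separate inclusion arguments, and your explicit check that $\mathscr{K}\setminus\mathscr{K}_\sharp\neq\emptyset$ is a detail the paper leaves implicit.
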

\begin{proof}
Note that $C_{\mathscr{K}}=C_{\mathscr{K}\setminus \mathscr{K}_\sharp}$. In particular, we can assume without loss of generality that $0\notin K$ for all $K \in \mathscr{K}$. 
Fix a nonzero $x\in C_{\mathscr{K}(C)}$ and a set $K \in \mathscr{K}$. Given the standing hypothesis, there exists $x_0\notin C$ such that $\mathrm{co}(\mathrm{cone}(K))=G_{x_0}$. 
By the definition of $C_{\mathscr{K}(C)}$, 
there exists $y_0 \in G_{x_0}$ such that $\langle x,y_0\rangle \ge 0$. 
However, since $\mathrm{co}(\mathrm{cone}(K))=G_{x_0}$, there exist positive reals $\lambda_1,\ldots,\lambda_k>0$ and vectors $y_1,\ldots,y_k \in K$ such that $y_0=\sum_{i=1}^k \lambda_i y_i$. 
By the linearity of $\langle \cdot, \cdot\rangle$, we conclude that $\langle x,y_i\rangle \ge 0$ for some $i \in \{1,\ldots,k\}$. Therefore $C_{\mathscr{K}(C)}\subseteq C_{\mathscr{K}}$. 

Conversely, suppose that $x\notin C_{\mathscr{K}(C)}$. By Theorem \ref{thm:dualconesgeneralized}, there exists $x_0\notin C$ such that $\langle x,y_0\rangle <0$ for all $y_0 \in G_{x_0}$. Pick $K \in \mathscr{K}$ with $\mathrm{co}(\mathrm{cone}(K))=G_{x_0}$. In particular, $\langle x,y\rangle <0$ for all $y \in K$. Therefore $C_{\mathscr{K}}\subseteq C_{\mathscr{K}(C)}$ and, hence, they coincide. 
The conclusion follows by Theorem \ref{thm:dualconesgeneralized}, so that $C=C_{\mathscr{K}(C)}=C_{\mathscr{K}}$. 
\end{proof}

The above representations 
are, of course, not the unique ones. As an example, consider the dual pair $(\mathbf{R}^2, \mathbf{R}^2)$. Then the positive cone $C=[0,\infty)^2$ is represented by the family $\mathscr{K}=\left\lbrace\{(\cos \alpha, \sin \alpha)\}: \alpha \in \left[0,\frac{\pi}{2}\right]\right\rbrace$, cf. Theorem \ref{thm:dualdualcone} below. However, for each $K \in \mathscr{K}$, the cone $\mathrm{co}(\mathrm{cone}(K))$ has empty interior, hence it cannot be a member of $\mathscr{K}(C)$, since the latter contains only nonempty open sets. 
%


In a sense, we are going to improve Corollary \ref{cor:easyrepresentation} by showing that the \textquotedblleft largest\textquotedblright\ representation $\mathscr{K}$ of a cone $C\subseteq X$ is precisely $\mathscr{K}(C)$. 
For, given a nonempty family $\mathscr{K}\subseteq \mathcal{P}(Y)$ of nonempty sets, we define 
$$
\widehat{\mathscr{K}}:=\left\{G_x: \exists K \in \mathscr{K}, \,\, K\subseteq G_x\right\}.
$$
Note that $\widehat{\mathscr{K}(C)}=\mathscr{K}(C)$ if $C\subseteq X$ is  proper and, by convention, $\widehat{\{\emptyset\}}=\{\emptyset\}$. 


\begin{thm}\label{thm:uniqueness}
Let $C\subseteq X$ be a pointed cone. Then $C$ is represented by a family $\mathscr{K}$ if and only if $\widehat{\mathscr{K}}=\mathscr{K}(C)$. 
\end{thm}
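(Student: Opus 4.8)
The plan is to prove the two implications separately. The direction $\widehat{\mathscr{K}} = \mathscr{K}(C) \Rightarrow C = C_{\mathscr{K}}$ is the substantive one, and the converse should follow by comparing the representation at hand with the canonical one $\mathscr{K}(C)$.

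For the forward direction, assume $\widehat{\mathscr{K}} = \mathscr{K}(C)$. First I would handle the degenerate case $C = X$: then $\mathscr{K}(C) = \{\emptyset\}$ (vacuously, since there is no $x \notin C$), so $\widehat{\mathscr{K}} = \{\emptyset\}$ forces $\mathscr{K} \subseteq \mathscr{K}_\sharp$ by unwinding the definition of $\widehat{(\cdot)}$ — no $K \in \mathscr{K}$ is contained in any proper $G_x$, which is exactly triviality — and hence $C_{\mathscr{K}} = X$ by the remark following Definition~\ref{defi:trivial}. So assume $C$ is proper. The key observation is the monotonicity $K \subseteq G_x \Rightarrow C_{\{G_x\}} \subseteq C_{\{K\}}$, because any $y \in K$ with $\langle z, y \rangle \ge 0$ lies in $G_x$. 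Conversely, if $K \not\subseteq G_x$ there is nothing forced. To show $C_{\mathscr{K}} \subseteq C = C_{\mathscr{K}(C)}$ (using Theorem~\ref{thm:dualconesgeneralized}): take $x \notin C$, so $G_x \in \mathscr{K}(C) = \widehat{\mathscr{K}}$, meaning some $K \in \mathscr{K}$ satisfies $K \subseteq G_x$; then every $y \in K$ has $\langle x, y \rangle < 0$, so $x \notin C_{\{K\}} \supseteq C_{\mathscr{K}}$, i.e.\ $x \notin C_{\mathscr{K}}$. For the reverse inclusion $C \subseteq C_{\mathscr{K}}$: take $x \in C$ and any $K \in \mathscr{K}$; I need $y \in K$ with $\langle x, y \rangle \ge 0$. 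Suppose not, i.e.\ $K \subseteq G_x$. Then $G_x \in \widehat{\mathscr{K}} = \mathscr{K}(C)$, which by definition of $\mathscr{K}(C)$ means $x \notin C$ — contradiction. Hence such $y$ exists and $x \in C_{\mathscr{K}}$.

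For the converse, assume $C$ is represented by $\mathscr{K}$, i.e.\ $C = C_{\mathscr{K}}$; I must show $\widehat{\mathscr{K}} = \mathscr{K}(C)$. Again dispatch $C = X$ first: then $C_{\mathscr{K}} = X$ gives $\mathscr{K} \subseteq \mathscr{K}_\sharp$, so no $K \in \mathscr{K}$ sits inside a proper $G_x$, whence $\widehat{\mathscr{K}} = \{\emptyset\} = \mathscr{K}(C)$ by convention. Assume $C$ proper. For $\widehat{\mathscr{K}} \subseteq \mathscr{K}(C)$: if $G_x \in \widehat{\mathscr{K}}$ then some $K \in \mathscr{K}$ has $K \subseteq G_x$, so every $z \in C = C_{\mathscr{K}}$ satisfies (in particular, at this $K$) $\langle z, y \rangle \ge 0$ for some $y \in K \subseteq G_x$, forcing $\langle z, y \rangle < 0$ — contradiction — unless $K = \emptyset$; so actually there is no $z \in C$ with... wait, the cleaner route: $K \subseteq G_x$ means $x \notin C_{\{K\}}$, and since $C_{\mathscr{K}} \subseteq C_{\{K\}}$ we get $x \notin C_{\mathscr{K}} = C$, so $G_x \in \mathscr{K}(C)$. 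For $\mathscr{K}(C) \subseteq \widehat{\mathscr{K}}$: take $G_x \in \mathscr{K}(C)$, i.e.\ $x \notin C = C_{\mathscr{K}}$; then there is $K \in \mathscr{K}$ with $\langle x, y \rangle < 0$ for all $y \in K$, i.e.\ $K \subseteq G_x$, so $G_x \in \widehat{\mathscr{K}}$.

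The main obstacle is bookkeeping the degenerate cases $C = X$ and the handling of $\{\emptyset\}$ under the $\widehat{(\cdot)}$ operation and triviality conventions; once those are pinned down, the proper case is a short chain of set inclusions built from the single monotonicity lemma $K \subseteq G_x \Leftrightarrow x \notin C_{\{K\}}$ together with Theorem~\ref{thm:dualconesgeneralized}. I should also double-check the edge case where $\mathscr{K}$ contains trivial sets: these contribute nothing to $C_{\mathscr{K}}$ and, being contained in no proper $G_x$ (by the remark that $G_x$ is never trivial), contribute nothing to $\widehat{\mathscr{K}}$ either, so they may be safely discarded at the outset — it is worth stating this reduction explicitly.
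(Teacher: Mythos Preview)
Your argument is correct and follows essentially the same route as the paper's: both pivot on the equivalence $K\subseteq G_x \Leftrightarrow x\notin C_{\{K\}}$ together with Theorem~\ref{thm:dualconesgeneralized}, with the paper merely packaging the proper case symmetrically as $C_{\mathscr{K}_1}=C_{\mathscr{K}_2}\Leftrightarrow\widehat{\mathscr{K}}_1=\widehat{\mathscr{K}}_2$ before specializing $\mathscr{K}_2=\mathscr{K}(C)$. One harmless slip: your stated ``key observation'' $K\subseteq G_x\Rightarrow C_{\{G_x\}}\subseteq C_{\{K\}}$ has the inclusion reversed (your own justification actually proves $C_{\{K\}}\subseteq C_{\{G_x\}}$), but you never invoke it in either direction in the subsequent steps, so the proof stands.
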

\begin{proof}
First, the claim holds if $C=X$. Indeed, if $X$ is represented by $\mathscr{K}$ then each $K \in \mathscr{K}$ belongs to $\mathscr{K}_\sharp$.  
Let us suppose for the sake of contradiction that there exist $x \in X$ and a nonempty $K\in \mathscr{K}$ such that $K\subseteq G_x$. 
It follows that 
$$
X=C_{\{K\}}\subseteq C_{\{G_x\}}\subseteq X\setminus \{x\},
$$
which is impossible. Therefore $\widehat{\mathscr{K}}=\mathscr{K}(X)=\{\emptyset\}$. 
Conversely, if $\widehat{\mathscr{K}}=\{\emptyset\}$, then for each $x \in X$ and for each $K \in \mathscr{K}$ there exists a vector $y_{x,K} \in X$ such that $y_{x,K} \in K\setminus G_x$. Therefore $\langle x,y_{x,K}\rangle \ge 0$, which proves that $K \in \mathscr{K}_\sharp$, so that $X$ is represented by $\mathscr{K}$. Accordingly, suppose hereafter that $C\neq X$. 

\medskip

We are going to show that $C$ is represented by two families $\mathscr{K}_1$ and $\mathscr{K}_2$ if and only if  $\widehat{\mathscr{K}}_1=\widehat{\mathscr{K}}_2$. Thus, the conclusion will follow from Theorem \ref{thm:dualconesgeneralized}. 

\medskip

\textsc{If part.} 
Pick an element $G_a \in \widehat{\mathscr{K}}_1$, which is possible since it is nonempty. 
Then there exists $K_1 \in \mathscr{K}_1$ which is contained in $G_a$. 
Since $C$ is represented by $\mathscr{K}_1$ and $\langle a,y\rangle<0$ for all $y \in K_1$, it follows that $a\notin C$. 
Since $C$ is represented also by $\mathscr{K}_2$, there exists $K_2 \in \mathscr{K}_2$ such that $\langle a,y\rangle<0$ for all $y \in K_2$. 
Hence $K_2 \subseteq G_a$, so that $G_a \in \widehat{\mathscr{K}}_2$. This proves that $\widehat{\mathscr{K}}_1\subseteq \widehat{\mathscr{K}}_2$ and, symmetrically, $\widehat{\mathscr{K}}_2\subseteq \widehat{\mathscr{K}}_1$. 

\medskip

\textsc{Only If part.} 
Fix a vector $x\notin C_{\mathscr{K}_1}$, so that there exists $K_1 \in \mathscr{K}_1$ such that $\langle x,y\rangle <0$ for all $y \in K_1$. 
Note that $K_1\subseteq G_x$, hence by the standing assumption 
$$
G_x \in \widehat{\mathscr{K}}_1=\widehat{\mathscr{K}}_2.
$$  
By the definition of $\widehat{\mathscr{K}}_2$, there exists $K_2 \in \mathscr{K}_2$ such that $K_2 \subseteq G_x$. In particular, $\langle x,y\rangle <0$ for all $y \in K_2$, which implies that $x\notin C_{\mathscr{K}_2}$. Therefore $C_{\mathscr{K}_2}\subseteq C_{\mathscr{K}_1}$ and, symmetrically, $C_{\mathscr{K}_1}\subseteq C_{\mathscr{K}_2}$. 
This concludes the proof. 
\end{proof}


\section{Characterizations of cones with additional properties}\label{sec:additionalproperties}

As anticipated in the Introduction, we are going to characterize cones with further structural and/or topological properties, taking sometimes inspiration from the results in \cite[Section 3]{MR3957335}. 
To start with, we provide a representation of the cones $C\subseteq X$ such that $C\cup (-C)=X$: 
\begin{thm}\label{thm:propertycompleteness}
Let $C\subseteq X$ be a proper pointed cone. 
Then the following are equivalent: 
\begin{enumerate}[label={\rm (\roman{*})}]
\item \label{item1:C-CX} $C\cup (-C)=X$\emph{;}
\item \label{item2:C-CX} $H\cap K\neq \emptyset$ for all $H,K \in \mathscr{K}(C)$\emph{;}
\item \label{item3:C-CX} there exists $\mathscr{K}\subseteq \mathcal{P}(Y)$ representing $C$ such that $H\cap K\neq \emptyset$ for all $H,K \in \mathscr{K}$\emph{.}
\end{enumerate}
\end{thm}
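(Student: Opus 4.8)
The plan is to establish the cyclic chain of implications $\ref{item1:C-CX}\Rightarrow\ref{item2:C-CX}\Rightarrow\ref{item3:C-CX}\Rightarrow\ref{item1:C-CX}$. The implication $\ref{item2:C-CX}\Rightarrow\ref{item3:C-CX}$ is essentially free: by Theorem~\ref{thm:dualconesgeneralized} the family $\mathscr{K}(C)$ represents $C$, and it is a nonempty family of nonempty sets because $C$ is proper and pointed, so every $x\notin C$ is nonzero and hence $G_x\neq\emptyset$ (some $y\in Y$ has $\langle x,y\rangle\neq 0$ since $Y$ separates the points of $X$, and one of $\pm y$ lies in $G_x$). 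Thus $\mathscr{K}=\mathscr{K}(C)$ witnesses \ref{item3:C-CX}.

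For $\ref{item3:C-CX}\Rightarrow\ref{item1:C-CX}$, fix a family $\mathscr{K}$ representing $C$ whose members pairwise intersect, and fix $x\in X\setminus C$; the goal is to show $-x\in C$. If this failed, then both $x$ and $-x$ would lie outside $C=C_{\mathscr{K}}$, so unwinding Definition~\ref{defi:base} produces $K_1,K_2\in\mathscr{K}$ with $K_1\subseteq G_x$ and $K_2\subseteq G_{-x}$. But $G_x\cap G_{-x}=\emptyset$, since $\langle x,y\rangle<0$ and $\langle -x,y\rangle<0$ cannot hold simultaneously; this contradicts $K_1\cap K_2\neq\emptyset$. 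Hence $-x\in C$, i.e. $x\in -C$, and \ref{item1:C-CX} follows.

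The implication $\ref{item1:C-CX}\Rightarrow\ref{item2:C-CX}$ is where the actual work sits. Writing an arbitrary pair in $\mathscr{K}(C)$ as $G_a,G_b$ with $a,b\notin C$, I must show $G_a\cap G_b\neq\emptyset$. The key auxiliary fact is that, for nonzero $a,b\in X$, one has $G_a\cap G_b=\emptyset$ if and only if $b\in\mathrm{cone}(\{-a\})$, i.e. $b$ is a negative scalar multiple of $a$. The nontrivial direction is that disjointness forces this: if $a,b$ are linearly independent then, since $Y$ separates the points of $X$, the linear map $y\mapsto(\langle a,y\rangle,\langle b,y\rangle)$ from $Y$ to $\mathbf{R}^2$ is surjective (its image is a linear subspace, and were it proper, some nonzero $\alpha a+\beta b$ would be annihilated by all of $Y$, hence zero, contradicting independence), so some $y$ gives $\langle a,y\rangle=\langle b,y\rangle=-1$ and lies in $G_a\cap G_b$; while if $a,b$ are dependent, $b=\mu a$ with $\mu\neq 0$, and $\mu>0$ yields $G_a=G_b\neq\emptyset$, only $\mu<0$ producing disjointness. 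This parallels the argument already used in Section~\ref{sec:mainresults} to characterize when $G_{x_1}=G_{x_2}$. Granting the fact, suppose $G_a\cap G_b=\emptyset$; then $b=\lambda(-a)$ for some $\lambda>0$, but $a\notin C$ together with \ref{item1:C-CX} gives $-a\in C$, whence $b=\lambda(-a)\in C$ because $C$ is a cone, contradicting $b\notin C$. Therefore $G_a\cap G_b\neq\emptyset$, which is \ref{item2:C-CX}.

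I expect the auxiliary fact on disjointness of the half-space cones $G_a,G_b$ to be the only delicate point, and even that is elementary and a near-reprise of the linear-independence reasoning already present in the paper; the rest is bookkeeping with Definition~\ref{defi:base} and Theorem~\ref{thm:dualconesgeneralized}.
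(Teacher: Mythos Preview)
Your proof is correct. The implications \ref{item2:C-CX}$\Rightarrow$\ref{item3:C-CX} and \ref{item3:C-CX}$\Rightarrow$\ref{item1:C-CX} match the paper's argument almost verbatim (the paper phrases the latter as a direct verification that $x\in -C$ rather than a contradiction, but the content is identical). For \ref{item1:C-CX}$\Rightarrow$\ref{item2:C-CX}, however, you take a genuinely different route: you first establish the dichotomy that $G_a\cap G_b=\emptyset$ forces $b\in\mathrm{cone}(\{-a\})$, and then derive a contradiction from $a\notin C$, $-a\in C$. The paper instead argues constructively: since $x_1,x_2\notin C$ and $C\cup(-C)=X$, both $x_1,x_2$ lie in $-C=-C_{\mathscr{K}(C)}$, so one can pick $y_1\in G_{x_2}$ with $\langle x_1,y_1\rangle\le 0$ and $y_2\in G_{x_1}$ with $\langle x_2,y_2\rangle\le 0$, and then the midpoint $\tfrac12(y_1+y_2)$ lands in $G_{x_1}\cap G_{x_2}$. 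The paper's averaging trick is shorter and avoids the case analysis on linear dependence; your approach has the merit of isolating a clean structural fact about the sets $G_x$ (which the paper only proves in the special case $G_{x_1}=G_{x_2}$) that could be reused elsewhere.
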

\begin{proof}
\ref{item1:C-CX} $\implies$ \ref{item2:C-CX}. 
Note that, more explicitly, condition \ref{item2:C-CX} states that, for all $x_1,x_2\notin C$, there exists $y \in Y$ for which $\langle x_1,y\rangle<0$ and $\langle x_2,y\rangle <0$. 
Now, fix vectors $x_1,x_2 \notin C$ and observe that, by condition \ref{item1:C-CX} and Theorem \ref{thm:dualconesgeneralized}, we have that both $x_1$ and $x_2$ belong to 
$$
-C=-C_{\mathscr{K}(C)}=\left\{x \in X: \forall x_0\notin C, \exists y \in G_{x_0}, \,\, \langle x,y\rangle \le 0\right\}.
$$
In particular, there exist $y_1 \in G_{x_2}$ and $y_2 \in G_{x_1}$ such that $\langle x_1,y_1\rangle\le 0$ and $\langle x_2,y_2\rangle\le 0$. But this implies that $y:=\frac{1}{2}(y_1+y_2) \in G_{x_1} \cap G_{x_2}$. 

\medskip 

\ref{item2:C-CX} $\implies$ \ref{item3:C-CX}. Choose  $\mathscr{K}=\mathscr{K}(C)$, thanks to Theorem \ref{thm:dualconesgeneralized}.

\medskip

\ref{item3:C-CX} $\implies$ \ref{item1:C-CX}. 
Fix a vector $x \in X\setminus C$ and a family $\mathscr{K}\subseteq \mathcal{P}(Y)$ which represents $C$ such that $H\cap K\neq \emptyset$ for all $H,K \in \mathscr{K}$. 
Then there exists $K \in \mathscr{K}$ such that $\langle x,y\rangle<0$ for all $y \in K$. 
Now, fix an arbitrary $H \in \mathscr{K}$. 
By hypothesis, there exists $y \in H\cap K$. 
Thus $x$ belongs to 
$$
\left\{z \in X: \forall H \in \mathscr{K}, \exists y \in H\cap K, \,\, \langle z,y\rangle \le 0\right\}\subseteq -C,
$$
which proves that $C\cup (-C)=X$. 
\end{proof}

We proceed with the characterization of convex cones, which will allow us to provide a complete answer to the open question \cite[OP2]{MR3957335}, see Corollary \ref{cor:transitivity} below. 
To this aim, we set $\mathscr{G}:=\{G_x: x\in X\setminus \{0\}\}$. 
\begin{thm}\label{thm:convexcones}
Let $C\subseteq X$ be a pointed cone. Then the following are equivalent\textup{:}
\begin{enumerate}[label={\rm (\roman{*})}]
\item \label{item1:convexity} $C$ is convex\textup{;}
\item \label{item2:convexity} $J \notin \mathscr{K}(C)$ for all $H,K,J \in \mathscr{G}$ such that 
$H,K \notin \mathscr{K}(C)$ and $J\subseteq H\cup K$\textup{;}
\item \label{item3:convexity} there exists $\mathscr{K}\subseteq \mathcal{P}(Y)$ representing $C$ with the property that, for all $H,K,J \in \mathscr{G}$ with $J\subseteq H\cup K$, the following holds\textup{:}
$$
\mathcal{P}(H)\cap \mathscr{K}=\emptyset 
\text{ and }
\mathcal{P}(K)\cap \mathscr{K}=\emptyset 
\,\,\,\,\text{ implies }\,\,\,\,
\mathcal{P}(J)\cap \mathscr{K}=\emptyset. 
$$
\end{enumerate}
\end{thm}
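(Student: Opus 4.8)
The plan is to establish the chain \ref{item1:convexity} $\Rightarrow$ \ref{item2:convexity} $\Rightarrow$ \ref{item3:convexity} $\Rightarrow$ \ref{item1:convexity}, working throughout with the representation $\mathscr{K}(C)$ supplied by Theorem~\ref{thm:dualconesgeneralized} and with the description of $\widehat{\mathscr{K}}$ from Theorem~\ref{thm:uniqueness}. The conceptual content is the following dictionary: a set $G_x$ lies in $\mathscr{K}(C)$ precisely when $x \notin C$, and the geometric inclusion $G_J \subseteq G_H \cup G_K$ (for nonzero $H,K,J$, writing $G_H$ etc. by abuse for the three generators) encodes a linear relation among the directions. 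The key elementary fact to isolate first is: for nonzero $x_1,x_2,x_3 \in X$, one has $G_{x_3}\subseteq G_{x_1}\cup G_{x_2}$ if and only if $x_3 \in \mathrm{cone}(\{x_1\}) \cup \mathrm{cone}(\{x_2\}) \cup \{\lambda_1 x_1 + \lambda_2 x_2 : \lambda_1,\lambda_2 > 0\}$, i.e. $x_3$ lies in the (closed, in the two-dimensional span) convex cone generated by $x_1$ and $x_2$. This is where the convexity of $C$ will be made to bite, since membership of such an $x_3$ in $C$ should follow from membership of $x_1,x_2$.

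For \ref{item1:convexity} $\Rightarrow$ \ref{item2:convexity}: suppose $C$ is convex and take $H=G_{x_1}$, $K=G_{x_2}$, $J=G_{x_3}$ in $\mathscr{G}$ with $H,K\notin\mathscr{K}(C)$ and $J\subseteq H\cup K$. Then $x_1,x_2\in C$. By the elementary fact above, $x_3$ is a positive combination $\lambda_1 x_1+\lambda_2 x_2$ (the two degenerate cases being handled by $C$ being a cone), hence $x_3 = (\lambda_1+\lambda_2)\bigl(\frac{\lambda_1}{\lambda_1+\lambda_2}x_1 + \frac{\lambda_2}{\lambda_1+\lambda_2}x_2\bigr)\in C$ by convexity and the cone property. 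Therefore $J=G_{x_3}\notin\mathscr{K}(C)$, as required. The implication \ref{item2:convexity} $\Rightarrow$ \ref{item3:convexity} is essentially bookkeeping: take $\mathscr{K}=\mathscr{K}(C)$. Since every member of $\mathscr{K}(C)$ has the form $G_x$ and, for a generator $G_z\in\mathscr{G}$, the condition $\mathcal{P}(G_z)\cap\mathscr{K}(C)\neq\emptyset$ means some $G_x\in\mathscr{K}(C)$ is contained in $G_z$, which by $\widehat{\mathscr{K}(C)}=\mathscr{K}(C)$ forces $G_z\in\mathscr{K}(C)$; so $\mathcal{P}(G_z)\cap\mathscr{K}(C)=\emptyset$ is equivalent to $G_z\notin\mathscr{K}(C)$, and \ref{item3:convexity} reduces to \ref{item2:convexity}. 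I should state this equivalence carefully, using that $G_{x_1}\subseteq G_{x_2}$ iff $\mathrm{cone}(\{x_2\})\subseteq\mathrm{cone}(\{x_1\})$ (the contrapositive of the separation argument already given in the text right before Theorem~\ref{thm:dualconesgeneralized}).

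For \ref{item3:convexity} $\Rightarrow$ \ref{item1:convexity}: fix $x_1,x_2\in C$ and a convex combination $x_3=tx_1+(1-t)x_2$ with $t\in(0,1)$; we want $x_3\in C$. We may assume $x_1,x_2,x_3$ nonzero and, after noting $C$ is a cone, it suffices to treat $x_3$ genuinely in the relative interior of the segment. Set $H=G_{x_1}$, $K=G_{x_2}$, $J=G_{x_3}$; by the elementary fact, $J\subseteq H\cup K$ since $x_3$ is a positive combination of $x_1,x_2$. Since $x_1,x_2\in C=C_{\mathscr{K}}$ and $\mathscr{K}$ represents $C$, no member of $\mathscr{K}$ can be contained in $G_{x_1}$ or in $G_{x_2}$ (such a member would witness $x_1\notin C$ or $x_2\notin C$); that is, $\mathcal{P}(H)\cap\mathscr{K}=\emptyset$ and $\mathcal{P}(K)\cap\mathscr{K}=\emptyset$. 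The hypothesis then yields $\mathcal{P}(J)\cap\mathscr{K}=\emptyset$, i.e. no $K'\in\mathscr{K}$ satisfies $K'\subseteq G_{x_3}$; equivalently, for every $K'\in\mathscr{K}$ there is $y\in K'$ with $\langle x_3,y\rangle\ge 0$, which says $x_3\in C_{\mathscr{K}}=C$. The main obstacle I anticipate is the careful verification of the elementary geometric fact about $G_{x_3}\subseteq G_{x_1}\cup G_{x_2}$, including the degenerate boundary cases $x_3\in\mathrm{cone}(\{x_1\})$ or $\mathrm{cone}(\{x_2\})$ and the case where $x_1,x_2$ are linearly dependent; this is a finite-dimensional statement inside $\mathrm{span}\{x_1,x_2\}$ but needs the functional-construction argument (à la the Hamel-basis argument preceding Theorem~\ref{thm:dualconesgeneralized}) to show that if $x_3$ is \emph{not} in that planar convex cone then one can produce $y$ with $\langle x_3,y\rangle<0$ yet $\langle x_1,y\rangle\ge 0$ and $\langle x_2,y\rangle\ge 0$, i.e. $y\in G_{x_3}\setminus(G_{x_1}\cup G_{x_2})$.
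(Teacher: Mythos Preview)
Your proposal is correct and follows essentially the same route as the paper. The ``elementary fact'' you isolate is precisely the paper's Lemma~\ref{lem:inclusionevilcone} (in contrapositive form), and your use of $\mathscr{K}=\mathscr{K}(C)$ for \ref{item2:convexity}$\Rightarrow$\ref{item3:convexity} matches the paper exactly; the only organizational difference is that the paper closes the cycle via \ref{item3:convexity}$\Rightarrow$\ref{item2:convexity} (invoking Theorem~\ref{thm:uniqueness}) and then \ref{item2:convexity}$\Rightarrow$\ref{item1:convexity}, whereas you argue \ref{item3:convexity}$\Rightarrow$\ref{item1:convexity} directly---but your direct argument is just the composition of those two steps, so nothing substantive changes.
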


Before giving the proof of Theorem \ref{thm:convexcones}, we need an auxiliary lemma.
\begin{lem}\label{lem:inclusionevilcone}
Fix nonzero vectors $a,b,c \in X$ such that $c\notin \mathrm{co}(\mathrm{cone}(\{a,b\}))$. Then 
$$
G_c\setminus (G_a\cup G_b)\neq \emptyset.
$$
\end{lem}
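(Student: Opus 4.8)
The plan is to produce an explicit functional in $Y$ witnessing the non-emptiness. Unwinding the definition of $G_x$, one has
$$
G_c\setminus(G_a\cup G_b)=\left\{y\in Y:\langle c,y\rangle<0,\ \langle a,y\rangle\ge 0,\ \langle b,y\rangle\ge 0\right\},
$$
so it suffices to find $y\in Y$ that is strictly negative at $c$ and nonnegative at both $a$ and $b$. The natural object to separate $c$ from is the finitely generated cone
$$
D:=\{\lambda a+\mu b:\lambda,\mu\ge 0\}=\mathrm{co}(\mathrm{cone}(\{a,b\}))\cup\{0\},
$$
and note that $c\notin D$: indeed $c\ne 0$ and $c\notin \mathrm{co}(\mathrm{cone}(\{a,b\}))$ by hypothesis.

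First I would check that $D$ is a nonempty closed convex cone containing $0$. Convexity, the cone property, and $0\in D$ are immediate. For closedness, observe that $D$ is contained in the finite-dimensional subspace $E:=\mathrm{span}\{a,b\}$; the restriction of the topology $\sigma(X,Y)$ to $E$ is a Hausdorff vector topology, hence coincides with the Euclidean one, and $D$ is the image of the closed cone $[0,\infty)^2$ under a linear map $\mathbf{R}^2\to E$ which is a homeomorphism onto its image when $a,b$ are linearly independent (the degenerate case being a ray or a line, hence closed directly). Thus $D$ is closed in $E$, and since finite-dimensional subspaces of a Hausdorff tvs are closed, $D$ is closed in $X$. (Alternatively, one may simply invoke that finitely generated cones are closed.)

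Next I would apply the Strong Separating Hyperplane Theorem, see \cite[Theorem 8.17]{MR2317344}, to the nonempty closed convex set $D$ and the point $c\notin D$: there exist a nonzero $y\in Y$ and $\alpha\in\mathbf{R}$ such that $\langle d,y\rangle\ge\alpha>\langle c,y\rangle$ for every $d\in D$. Evaluating at $0\in D$ gives $\alpha\le 0$, hence $\langle c,y\rangle<0$, i.e. $y\in G_c$. Since $D$ is a cone, $\lambda\langle d,y\rangle\ge\alpha$ for all $\lambda>0$ and all $d\in D$; letting $\lambda\to\infty$ forces $\langle d,y\rangle\ge 0$ for every $d\in D$, and in particular $\langle a,y\rangle\ge 0$ and $\langle b,y\rangle\ge 0$, that is, $y\notin G_a\cup G_b$. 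Therefore $y\in G_c\setminus(G_a\cup G_b)$, which proves the claim.

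The only delicate point is verifying that $D$ is closed in the weak topology $\sigma(X,Y)$; the rest is a routine separation argument paralleling the second half of the proof of Theorem~\ref{thm:dualconesgeneralized}.
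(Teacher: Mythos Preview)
Your argument is correct. The approach, however, differs from the paper's. The paper proceeds by a three-way case split on the linear dependence relations among $a,b,c$ (whether $b,c\in\mathrm{span}(\{a\})$, whether $c\in\mathrm{span}(\{a,b\})$ with $b\notin\mathrm{span}(\{a\})$, or $c\notin\mathrm{span}(\{a,b\})$), and in each case constructs an explicit functional $y$ by prescribing its values on an appropriate Hamel basis and invoking Hahn--Banach. Your proof instead packages everything into a single separation: you note that the finitely generated cone $D=\{\lambda a+\mu b:\lambda,\mu\ge 0\}$ is closed (because it lives in a finite-dimensional subspace, where closedness is elementary), and then apply the Strong Separating Hyperplane Theorem exactly as in the proof of Theorem~\ref{thm:dualconesgeneralized}. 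This is cleaner and more conceptual; your only residual case split (independent versus dependent $a,b$) is confined to the closedness check and is much lighter than the paper's three cases. What the paper's approach buys in exchange is explicitness: the separating functional is written down concretely rather than obtained abstractly.
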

\begin{proof}
We divide the proof into three subcases.


\medskip

\textsc{Case (i): $b,c\in \mathrm{span}(\{a\})$}. Suppose that $b=\beta a$ and $c=\gamma a$, for some nonzero $\beta,\gamma \in \mathbf{R}$. Since $c\in \mathrm{span}(\{a\})\setminus \mathrm{co}(\mathrm{cone}(\{a,b\}))$ then necessarily $\gamma<0<\beta$. Then, it is enough to pick an arbitrary $y \in Y$ with the property that $\langle a,y\rangle=1$. Note that this is possible by an application of Hahn--Banach theorem (in the following cases we proceed similarly). It follows by construction that $\langle b,y\rangle=\beta$ and $\langle c,y\rangle=\gamma$. Therefore $y \in G_c\setminus (G_a\cup G_b)$.

\medskip

\textsc{Case (ii): $c \in \mathrm{span}(\{a,b\})$ and $b\notin \mathrm{span}(\{a\})$}. Suppose that $c=\alpha a+\beta b$, for some $\alpha,\beta \in \mathbf{R}$, and $b$ is not a multiple of $a$. Since $c\neq 0$ and $c\notin \mathrm{co}(\mathrm{cone}(\{a,b\}))$ then $\min\{\alpha,\beta\}<0$. Suppose, up to relabelings, that $\alpha<0$. If $\beta\le 0$, pick $y \in Y$ such that $\langle a,y\rangle=\langle b,y\rangle=1$, hence $\langle c,y\rangle=\alpha+\beta\le \alpha<0$. In the opposite, if $\beta>0$, pick $y \in Y$ such that $\langle a,y\rangle=1$ and $\langle b,y\rangle=-\alpha/(2\beta)$, hence $\langle c,y\rangle=\alpha/2<0$. In both cases, the conclusion follows as above.

\medskip

\textsc{Case (iii): $c \notin \mathrm{span}(\{a,b\})$}. It enough to pick $y \in Y$ such that $\langle a,y\rangle=\langle b,y\rangle=0$ and $\langle c,y\rangle=-1$. The conclusion follows as above.
\end{proof}

\begin{proof}[Proof of Theorem \ref{thm:convexcones}]
\ref{item1:convexity} $\implies$ \ref{item2:convexity}. First of all, note that the implication holds if $C=\{0\}$, hence let us assume hereafter that $C\neq \{0\}$ or equivalently $\mathscr{K}(C)\neq \mathscr{G}$. 
Fix $H,K \in \mathscr{G}\setminus \mathscr{K}(C)$ and $J \in \mathscr{G}$ such that $J\subseteq H\cup K$. 
By the definition of $\mathscr{G}$, there exist nonzero vectors $a,b,c \in X$ such that $H=G_a$, $K=G_b$, and $J=G_c$. 
It follows by Theorem \ref{thm:dualconesgeneralized} that $a,b \in C$. In addition, thanks to Lemma \ref{lem:inclusionevilcone}, we have $c\in \mathrm{co}(\mathrm{cone}(\{a,b\}))$. Since $C$ is a convex cone, we conclude that $c \in C$, that is, $G_c=J\notin \mathscr{K}(C)$.

\medskip

\ref{item2:convexity} $\implies$ \ref{item1:convexity}. Fix $a,b \in C$ and $t \in (0,1)$ and define $c:=ta+(1-t)b$. Then by definition of $\mathscr{K}(C)$, we have $G_a,G_b\notin \mathscr{K}(C)$. Pick $y \in G_c$, so that $\langle c,y\rangle=t \langle a,y\rangle+(1-t)\langle b,y\rangle <0$. This implies that $\min\{\langle a,y\rangle,\langle b,y\rangle \}<0$. Therefore $y \in G_a\cup G_b$. By \ref{item2:convexity}, we have $G_c\notin \mathscr{K}(C)$, which implies that $c\in C$.

\medskip

\ref{item2:convexity} $\implies$ \ref{item3:convexity}. It follows by setting $\mathscr{K}=\mathscr{K}(C)$ and using Theorem \ref{thm:dualconesgeneralized}.

\medskip

\ref{item3:convexity} $\implies$ \ref{item2:convexity}. It follows by Theorem \ref{thm:uniqueness}.
\end{proof}

\begin{defi}
Let $S\subseteq X$ be a nonempty subset. We denote its \emph{dual cone} by 
$$
S^\prime:=\left\{y \in Y: \forall x \in S, \,\, \langle x,y\rangle \ge 0\right\}.
$$
\end{defi}
There will be no risk of confusion with the topological dual of a tvs.  
Note that $S^\prime$ is a closed convex cone in $Y$, that is, $S^\prime \in \mathscr{C}(Y)$. Also, $S^\prime=(\overline{\mathrm{co}}(\mathrm{cone}(S)))^\prime$.




\begin{lem}\label{lem:standardlemmaduality}
Fix cones $C,D\in \mathscr{C}(X)$. Then $C\subseteq D$ if and only if $D^\prime \subseteq C^\prime$. 
\end{lem}
\begin{proof}
It is clear that, $D^\prime \subseteq C^\prime$ whenever $C\subseteq D$.  
Conversely, suppose that $C$ is not contained in $D$, so that there exists a vector $x_0 \in C\setminus D$. 
It follows by the Strong Separating Hyperplane Theorem, 
see e.g. \cite[Theorem 8.17]{MR2317344}, 
that there exist a functional $y_0 \in Y$ and a real $\alpha \in \mathbf{R}$ such that 
$$
\forall x \in D, \quad 
\langle x_0,y_0\rangle <\alpha \le \langle x,y_0\rangle. 
$$
Since $D$ is a cone, then $\alpha \le 0$. This implies that $y_0 \in D^\prime \setminus C^\prime$. 
\end{proof}

The following corollary is immediate from Lemma \ref{lem:standardlemmaduality}:
\begin{cor}\label{cor:uniquenessconedualclosedconvex}
The map $\mathscr{C}(X)\to \mathscr{C}(Y)$ defined by $C\mapsto C^\prime$ is bijective. 
\end{cor}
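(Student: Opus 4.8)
The plan is to deduce the statement directly from Lemma \ref{lem:standardlemmaduality} by exhibiting an explicit two-sided inverse. First I would check injectivity: if $C_1^\prime = C_2^\prime$ with $C_1, C_2 \in \mathscr{C}(X)$, then $C_1^\prime \subseteq C_2^\prime$ and $C_2^\prime \subseteq C_1^\prime$, so Lemma \ref{lem:standardlemmaduality} gives $C_2 \subseteq C_1$ and $C_1 \subseteq C_2$, hence $C_1 = C_2$. Thus the map is injective.

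Next I would address surjectivity. The natural candidate for the inverse is the analogous dual-cone operation run in the other direction: for $D \in \mathscr{C}(Y)$, set $D^\prime := \{x \in X : \forall y \in D,\ \langle x, y\rangle \ge 0\}$, which lies in $\mathscr{C}(X)$ by the remark preceding Lemma \ref{lem:standardlemmaduality} (with the roles of $X$ and $Y$ swapped, which is legitimate since $(X,Y)$ is a symmetric dual pair and $Y^\prime = X$). It then suffices to show that the bidual $(C^\prime)^\prime$ equals $C$ for every $C \in \mathscr{C}(X)$; given a target $D \in \mathscr{C}(Y)$, the cone $C := D^\prime \in \mathscr{C}(X)$ will then satisfy $C^\prime = (D^\prime)^\prime = D$ by the symmetric version of the bipolar identity, establishing surjectivity.

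For the bidual identity $(C^\prime)^\prime = C$: the inclusion $C \subseteq (C^\prime)^\prime$ is immediate from the definitions, since every $x \in C$ pairs nonnegatively with every $y \in C^\prime$. For the reverse inclusion, I would argue by contraposition using strong separation: if $x_0 \notin C$, then since $C$ is a closed convex cone, the Strong Separating Hyperplane Theorem (as cited, \cite[Theorem 8.17]{MR2317344}) yields $y_0 \in Y$ and $\alpha \in \mathbf{R}$ with $\langle x_0, y_0\rangle < \alpha \le \langle x, y_0\rangle$ for all $x \in C$; because $C$ is a cone, $\alpha \le 0$, so $\langle x, y_0\rangle \ge 0$ for all $x \in C$ (i.e. $y_0 \in C^\prime$) while $\langle x_0, y_0\rangle < 0$, which shows $x_0 \notin (C^\prime)^\prime$.

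The argument is essentially routine once the right inverse map is identified; the only point requiring mild care is the legitimacy of applying the dual-cone construction and the separation theorem symmetrically on $Y$, which is justified by the standing convention that $(X,Y)$ is a dual pair with $X = Y^\prime$ and $Y = X^\prime$ under the weak and weak$^\star$ topologies, so $Y$ is itself a locally convex Hausdorff tvs to which the same theorems apply. No real obstacle is expected; the proof is a short packaging of Lemma \ref{lem:standardlemmaduality} together with the bipolar theorem.
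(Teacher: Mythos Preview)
Your proof is correct and follows the same line the paper intends: injectivity is read off Lemma~\ref{lem:standardlemmaduality}, and surjectivity comes from the bipolar identity $C=(C^\prime)^\prime$ applied symmetrically on the dual pair. The only minor redundancy is that your separation argument for $(C^\prime)^\prime\subseteq C$ reproduces the proof of Lemma~\ref{lem:standardlemmaduality}; you could instead note that $C^\prime\subseteq (C^\prime)^{\prime\prime}=C^{\prime\prime\prime}$ trivially and then apply Lemma~\ref{lem:standardlemmaduality} once more to get $C^{\prime\prime}\subseteq C$.
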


Another application of Lemma \ref{lem:standardlemmaduality} follows below, providing an abstract version of a result of Evren \cite[Theorem 5 and Theorem 5a]{MR2398816}:
\begin{cor}
Let $C\subseteq X$ and $A,B\subseteq Y$ be convex cones such that both $A$ and $B$ are relatively closed and contained in the topological dual $D^\prime$, where $D:=C-C$. 
Then $A\subseteq B$ if and only if $B^\prime \cap D \subseteq A^\prime \cap D$. 
\end{cor}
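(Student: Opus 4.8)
The plan is to reduce the statement to an application of Lemma~\ref{lem:standardlemmaduality} applied inside the space $Y$ (with its own dual $Y^\prime = X$), after first checking that the relevant cones live in the right places. The key observation is that $A$ and $B$ are convex cones in $Y$ which are \emph{relatively} closed inside the closed subspace $D^\prime$; but $D^\prime$ is itself closed in $Y$ (being a dual cone, by the remark before Lemma~\ref{lem:standardlemmaduality}, or simply because $D^\prime = D^\perp$ as $D = C-C$ is a subspace), so $A$ and $B$ are in fact closed in $Y$, i.e.\ $A, B \in \mathscr{C}(Y)$.

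With that in hand, I would argue as follows. First suppose $A \subseteq B$. Then by Lemma~\ref{lem:standardlemmaduality} (with the roles of the two spaces in the dual pair interchanged, using $Y^\prime = X$) we get $B^\prime \subseteq A^\prime$ in $X$, and intersecting with $D$ gives $B^\prime \cap D \subseteq A^\prime \cap D$. This direction is essentially immediate and uses only monotonicity of the dualizing operation, not even the full strength of the lemma.

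For the converse, suppose $A \not\subseteq B$. Since $A, B \in \mathscr{C}(Y)$, Lemma~\ref{lem:standardlemmaduality} yields a vector $x_0 \in B^\prime \setminus A^\prime$ in $X$. The point of the hypothesis $A, B \subseteq D^\prime$ is now crucial: I claim one may take $x_0 \in D$. Indeed, the separation argument in the proof of Lemma~\ref{lem:standardlemmaduality} separates $x_0$ from $A$; since $A \subseteq D^\prime = D^\perp$, the functional on $Y$ being separated lies essentially in $(D^\perp)^\prime$, which — because $D$ is a closed subspace of $X$ in the weak topology and $(D^\perp)^\prime = \overline{D} = D$ by the bipolar theorem for subspaces — forces the separating element to be (or to be replaceable by) an element of $D$. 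Concretely, decompose an arbitrary separating $x_0$ modulo $D^\perp{}^\perp = D$: its component orthogonal to $D$ pairs to zero against all of $D^\prime \supseteq A \cup B$, so it contributes nothing to membership in $A^\prime$ or $B^\prime$, and may be discarded, leaving an element of $D$ in $B^\prime \cap D$ but not in $A^\prime \cap D$. Hence $B^\prime \cap D \not\subseteq A^\prime \cap D$, completing the contrapositive.

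The main obstacle, and the one place where care is needed, is the claim that the separating element in the converse direction can be chosen inside $D$; this is where the hypothesis that $A$ and $B$ are contained in $D^\prime$ (rather than arbitrary cones in $Y$) does the real work, and one must invoke that $D = C - C$ is a linear subspace so that $D^\prime$ is its annihilator and $D = (D^\prime)^\perp$ by the bipolar theorem. Everything else is a routine transcription of Lemma~\ref{lem:standardlemmaduality} with the two members of the dual pair swapped.
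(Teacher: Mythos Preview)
You have misread the symbol $D^\prime$. The statement says explicitly ``the topological dual $D^\prime$'', not the dual cone of $D$. Since $D=C-C$ is a linear subspace, the dual \emph{cone} of $D$ would indeed be the annihilator $D^\perp\subseteq Y$, as you note; but under that reading the corollary is false, not merely hard to prove. If $A,B\subseteq D^\perp$, then every $d\in D$ satisfies $\langle d,a\rangle=0$ for all $a\in A$ and $\langle d,b\rangle=0$ for all $b\in B$, so $A^\prime\cap D=B^\prime\cap D=D$ regardless of $A$ and $B$. The right-hand side of the biconditional is then vacuously true, while $A\subseteq B$ need not hold. Your converse argument inevitably breaks down at the end: any element of $D$ you produce will lie in $A^\prime$ automatically, so you cannot exhibit a point of $(B^\prime\cap D)\setminus(A^\prime\cap D)$.

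The ``decomposition modulo $D$'' step is also not sound on its own terms: in a general locally convex space there is no orthogonal projection onto a subspace, and the bipolar relation gives only $(D^\perp)^\perp=\overline{D}$, not $D$, so you cannot simply discard a ``component orthogonal to $D$'' and land back in $D$.

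The paper's argument avoids all of this by changing the ambient dual pair rather than projecting inside $(X,Y)$. One regards $(D,D^\prime)$ itself as a dual pair, with $D^\prime$ the topological dual of the subspace $D$, and applies Lemma~\ref{lem:standardlemmaduality} there with $A,B\in\mathscr{C}(D^\prime)$. The dual cone of $B$ computed in this pairing is $\{d\in D:\langle d,b\rangle\ge 0\text{ for all }b\in B\}$, which is exactly $B^\prime\cap D$; likewise for $A$. The equivalence then drops out in one line. In short, the ``projection into $D$'' you are trying to manufacture by hand is built into the choice of dual pair.
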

\begin{proof}
Note that $D$ is the vector space generated by $C$. Setting $X=D^\prime$ in Lemma \ref{lem:standardlemmaduality} and using the duality of the pair $(D,D^\prime)$, we obtain that $A\subseteq B$ if and only if the dual cone of $B$ with respect to $D$, which coincides with $B^\prime \cap D$, is contained in the dual cone of $A$ with respect to $D$, which is $A^\prime \cap D$, concluding the proof.
\end{proof}

In the next result, we characterize the class of certain cones which contains the dual of a given closed convex cone $H\neq \{0\}$: 
\begin{thm}\label{thm:includeclosedconvexconefunctions}
Let $C\subseteq X$ be a proper pointed cone 
which is represented by a family $\mathscr{K}\subseteq \mathcal{P}(Y)$ of nonempty convex compact sets. 
Fix also a closed convex cone $H\subseteq Y$ with $H\neq \{0\}$. 
Then the following are equivalent\emph{:}
\begin{enumerate}[label={\rm (\roman{*})}]
\item \label{item1:H-HX} $H^\prime \subseteq C$\emph{;}
\item \label{item2:H-HX} $H\cap K\neq \emptyset$ for all $K \in \mathscr{K}(C)$\emph{;}
\item \label{item3:H-HX} there exists $\mathscr{K}\subseteq \mathcal{P}(Y)$ representing $C$ such that $H\cap K\neq \emptyset$ for all $K \in \mathscr{K}$. 
\end{enumerate}
\end{thm}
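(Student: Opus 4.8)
\textbf{Proof plan for Theorem \ref{thm:includeclosedconvexconefunctions}.} The plan is to prove the cycle \ref{item1:H-HX} $\implies$ \ref{item2:H-HX} $\implies$ \ref{item3:H-HX} $\implies$ \ref{item1:H-HX}, using the duality machinery (Lemma \ref{lem:standardlemmaduality}, Corollary \ref{cor:uniquenessconedualclosedconvex}) together with the representation theorems of Section \ref{sec:mainresults}. Throughout, recall that $\mathscr{K}(C)=\{G_x : x\notin C\}$ and that $H^\prime$ is a closed convex cone in $X$.

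\textbf{\ref{item1:H-HX} $\implies$ \ref{item2:H-HX}.} Assume $H^\prime\subseteq C$ and fix $K=G_x\in\mathscr{K}(C)$, so $x\notin C$; I want $y\in H$ with $\langle x,y\rangle<0$, i.e. $H\not\subseteq G_x^{\,\,c}$, equivalently $x\notin H^\prime$ would \emph{not} suffice on the nose, so one must be slightly careful: since $x\notin C\supseteq H^\prime$ we get $x\notin H^\prime$, and by definition of the dual cone $H^\prime$ this means precisely that there exists $y\in H$ with $\langle x,y\rangle<0$, i.e. $y\in H\cap G_x$. Hence $H\cap K\neq\emptyset$.

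\textbf{\ref{item2:H-HX} $\implies$ \ref{item3:H-HX}.} Immediate by taking $\mathscr{K}=\mathscr{K}(C)$, which represents $C$ by Theorem \ref{thm:dualconesgeneralized}.

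\textbf{\ref{item3:H-HX} $\implies$ \ref{item1:H-HX}.} This is the substantive direction and where I expect the real work, and it is the only place the convex-compactness hypothesis on the representing family enters. Fix a representing family $\mathscr{K}$ of nonempty convex compact sets with $H\cap K\neq\emptyset$ for every $K\in\mathscr{K}$. Take $x\in H^\prime$; I must show $x\in C=C_{\mathscr{K}}$, i.e. for each $K\in\mathscr{K}$ there is $y\in K$ with $\langle x,y\rangle\ge 0$. Suppose not: then $K\subseteq G_x$, i.e. $\langle x,y\rangle<0$ for all $y\in K$. Since $K$ is compact and $\langle x,\cdot\rangle$ is weak$^\star$-continuous, $\sup_{y\in K}\langle x,y\rangle=-\varepsilon<0$ for some $\varepsilon>0$. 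On the other hand, by hypothesis there is $y_0\in H\cap K$, and $y_0\in H$ together with $x\in H^\prime$ forces $\langle x,y_0\rangle\ge 0$ — contradicting $\langle x,y_0\rangle\le-\varepsilon<0$. (In fact compactness is not even needed for this argument once we have a point of $H\cap K$; but it is harmless, and it is the hypothesis under which $C$ is guaranteed representable by such a family — cf.\ Theorem \ref{thm:closedcone}.) Hence $x\in C_{\mathscr{K}}=C$, proving $H^\prime\subseteq C$.

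\textbf{Main obstacle.} The only delicate point is bookkeeping around the two potentially different families $\mathscr{K}$ appearing in \ref{item3:H-HX} versus the hypothesis of the theorem: the theorem posits $C$ is represented by \emph{some} family of convex compact sets, while \ref{item3:H-HX} asserts existence of \emph{a} representing family meeting $H$. To close the cycle cleanly one uses \ref{item2:H-HX} (phrased via the canonical $\mathscr{K}(C)$) as the pivot, so that \ref{item3:H-HX} $\implies$ \ref{item2:H-HX} can be recovered if needed via Theorem \ref{thm:uniqueness}: any representing family $\mathscr{K}$ has $\widehat{\mathscr{K}}=\mathscr{K}(C)$, so $H$ meeting every member of $\mathscr{K}$ propagates to $H$ meeting every $G_x\in\mathscr{K}(C)$ (if $H\cap K\neq\emptyset$ and $K\subseteq G_x$ then $H\cap G_x\neq\emptyset$). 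I would present the direct cycle above and remark that the convex-compactness assumption, though only used to guarantee existence of such a representation, makes the argument in \ref{item3:H-HX} $\implies$ \ref{item1:H-HX} transparent.
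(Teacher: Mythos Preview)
Your argument is correct, and for the implication \ref{item1:H-HX} $\implies$ \ref{item2:H-HX} it is in fact more direct than the paper's. The paper argues by contradiction and invokes the Strong Separating Hyperplane Theorem to separate $H$ from a set $K\in\mathscr{K}(C)$ disjoint from it, producing a functional $x_0\in H'\setminus C$. You instead exploit the concrete description $K=G_x$ with $x\notin C$: the condition $H\cap G_x=\emptyset$ says exactly that $\langle x,y\rangle\ge 0$ for all $y\in H$, i.e.\ $x\in H'$, which already contradicts $H'\subseteq C$. This sidesteps the separation theorem entirely (and avoids the need to justify its applicability to the pair $H$, $G_x$, neither of which is compact). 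The remaining implications \ref{item2:H-HX} $\implies$ \ref{item3:H-HX} and \ref{item3:H-HX} $\implies$ \ref{item1:H-HX} match the paper's proof essentially verbatim, and your observation that compactness and convexity of the representing family are not actually used in the cycle is accurate: the hypothesis is there to delineate the class of cones to which the theorem applies (cf.\ the remark following the theorem), not because the equivalence itself requires it. The ``main obstacle'' paragraph is unnecessary---the cycle closes cleanly without appealing to Theorem~\ref{thm:uniqueness}---but it does no harm.
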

\begin{proof}
\ref{item1:H-HX} $\implies$ \ref{item2:H-HX}. 
Let us suppose for the sake of contradiction that $H^\prime \subseteq C$ and that there exists $K \in \mathscr{K}(C)$ such that $H \cap K=\emptyset$. Recalling that $Y$ is locally convex and $Y^\prime=X$, it follows by the Strong Separating Hyperplane Theorem, 
see e.g. \cite[Theorem 8.17]{MR2317344}, 
that there exist $x_0 \in X$ and $\alpha,\beta \in \mathbf{R}$ such that 
$$
\forall h \in H, \forall k \in K, \quad 
\langle x_0,k\rangle \le  \alpha < \beta \le \langle x_0,h\rangle. 
$$
However, since $H$ is a cone containing a nonzero vector, then $\beta \le 0$. On the one hand, it follows that $\langle x_0,k\rangle<0$ for all $k \in K \in \mathscr{K}(C)$, hence $x_0\notin C_{\mathscr{K}(C)}=C$ by Theorem \ref{thm:dualconesgeneralized}. 
On the other hand, we have that $\langle x_0,h\rangle \ge 0$ for all $h \in H$. Therefore $x_0 \in H^\prime\subseteq C$, providing the desired contradiction. 

\medskip

\ref{item2:H-HX} $\implies$ \ref{item3:H-HX}. Choose  $\mathscr{K}=\mathscr{K}(C)$, thanks to Theorem \ref{thm:dualconesgeneralized}.

\medskip 

\ref{item3:H-HX} $\implies$ \ref{item1:H-HX}. 
Pick a vector $x \in X$ such that $\langle x,y \rangle \ge 0$ for all $y \in H$. Since $C$ is represented by $\mathscr{K}$ and $H\cap K\neq \emptyset$ for all $K \in \mathscr{K}$, it follows that $x \in C$. Therefore the dual cone $H^\prime$ is contained in $C$. 
\end{proof}

\begin{rmk}
As it turns out from Theorem \ref{thm:closedcone} and Remark \ref{rmk:closedconvexforclosedcones_Banach} below, if $X$ is a Banach space then the class of cones $C\subseteq X$ satisfying the hypothesis of Theorem \ref{thm:includeclosedconvexconefunctions} is precisely the family of closed cones $C\subseteq X$. 
\end{rmk}

Now, we characterize closed convex cones $C\subseteq X$. 
While it is an easy consequence of the Bipolar theorem that $C=C^{\prime\prime}$, we show also that, if $C=D^\prime$ for some closed convex cone $D\subseteq Y$, then necessarily $D=C^\prime$. 
In addition, we characterize the interior of such cones. 
This provides a generalization of the main results contained in \cite{Dubra_et_al, MR3182925}, cf. also \cite{MR3030771, MR2291506, MR2977438}. 

To this aim, recall that a locally convex topology $\tau$ on $X$ is said to be \emph{consistent} if the topological dual of $(X,\tau)$ coincides with $X^\prime$. 
\begin{thm}\label{thm:dualdualcone}
Fix cones $C \in \mathscr{C}(X)$ and $D \in \mathscr{C}(Y)$. 
Then the following are equivalent\emph{:}
\begin{enumerate}[label={\rm (\roman{*})}]
\item \label{item01:base} $D=C^\prime$\emph{;} 
\item \label{item02:base} $C=D^\prime$\emph{.}
\end{enumerate}

If, in addition, $C$ and $D^\prime$ have nonempty interior with respect to a consistent locally convex topology $\tau$ on $X$, then they are also equivalent to\emph{:} 
\begin{enumerate}[label={\rm (\roman{*})}]
\setcounter{enumi}{2}
\item \label{item03:base} $\mathrm{int}(C)=\{x \in X: \forall y \in D\setminus \{0\}, \,\, \langle x,y\rangle > 0\}$. 
\end{enumerate}
\end{thm}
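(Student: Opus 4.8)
The plan is to prove the equivalence $\ref{item01:base}\Leftrightarrow\ref{item02:base}$ first, then $\ref{item02:base}\Rightarrow\ref{item03:base}$ under the extra hypothesis. For the first equivalence, both directions are essentially the Bipolar theorem together with Lemma \ref{lem:standardlemmaduality} and Corollary \ref{cor:uniquenessconedualclosedconvex}. Assume $D=C^\prime$. Since $C\in\mathscr{C}(X)$ is a closed convex cone, the Bipolar theorem (applied to the dual pair $(X,Y)$) gives $C^{\prime\prime}=C$, i.e.\ $D^\prime=(C^\prime)^\prime=C$, which is \ref{item02:base}. Conversely, if $C=D^\prime$, then by the Bipolar theorem in $Y$ we have $D^{\prime\prime}=D$ (as $D\in\mathscr{C}(Y)$), and taking the dual cone of $C=D^\prime$ yields $C^\prime=D^{\prime\prime}=D$, which is \ref{item01:base}. (Uniqueness of $D$ with $C=D^\prime$ then also follows, as promised in the surrounding text, directly from Corollary \ref{cor:uniquenessconedualclosedconvex}.)

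Now assume in addition that $C$ and $D^\prime$ have nonempty interior with respect to a consistent locally convex topology $\tau$ on $X$; by the already-proved equivalence $C=D^\prime$, so it is the single cone $C$ that has nonempty $\tau$-interior, and $D=C^\prime$. I would prove \ref{item03:base} by double inclusion. For $\subseteq$: let $x\in\mathrm{int}(C)$ and fix $y\in D\setminus\{0\}$; since $\tau$ is consistent, $X^\prime$ is the $\tau$-dual, so $y$ is $\tau$-continuous and nonzero, hence not identically $0$ on any $\tau$-neighborhood of $x$. Because $x$ is interior, there is a $\tau$-neighborhood $U$ of $0$ with $x+U\subseteq C$, and we may take $U$ balanced; then for every $u\in U$, both $x+u$ and $x-u$ lie in $C$, so $\langle x,y\rangle\geq|\langle u,y\rangle|$ for all $u\in U$, and choosing $u\in U$ with $\langle u,y\rangle\neq 0$ (possible since $y\ne 0$ is $\tau$-continuous and $U$ absorbs) gives $\langle x,y\rangle>0$. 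For $\supseteq$: suppose $\langle x,y\rangle>0$ for all $y\in D\setminus\{0\}$. Pick any $x_0\in\mathrm{int}(C)$; I claim the whole segment $(0,1]\ni t\mapsto (1-t)x+tx_0$ stays in $\mathrm{int}(C)$ for small $t>0$ and, more to the point, that $x$ itself is interior. The cleanest route: it suffices to show $x_0$ can be chosen so that $x=\lambda(x-x_0)+x_0$-type convex-combination arguments place $x$ in the interior, i.e.\ show $x\in\mathrm{int}(C)$ by exhibiting a $\tau$-neighborhood of $x$ inside $C$. Equivalently, using that $C$ is closed convex with nonempty interior, $\mathrm{int}(C)$ coincides with the set of $x\in C$ that are \emph{not} on the boundary, and a supporting-hyperplane argument shows every boundary point $x$ of $C$ admits a nonzero $y\in C^\prime=D$ with $\langle x,y\rangle=0$; contrapositive gives the inclusion. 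Concretely: if $x\notin\mathrm{int}(C)$ then $x$ lies in the $\tau$-closure of $X\setminus C$, so $\{x\}$ and $\mathrm{int}(C)$ (nonempty open convex, disjoint from the exterior) can be separated by a nonzero $\tau$-continuous functional $y$, i.e.\ $y\in Y$ with $\langle x,y\rangle\le\inf_{z\in C}\langle z,y\rangle$; since $C$ is a cone this infimum is $0$ and $y\in C^\prime=D$, while $\langle x,y\rangle\le 0$, contradicting $\langle x,y\rangle>0$ (note $y\neq 0$, so $y\in D\setminus\{0\}$). Hence $x\in\mathrm{int}(C)$.

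The main obstacle I anticipate is the $\supseteq$ inclusion in \ref{item03:base}: one must be careful that the separating functional obtained lives in $Y=X^\prime$ (this is exactly where \emph{consistency} of $\tau$ is used — the $\tau$-continuous functionals are precisely the elements of $X^\prime$), that it is genuinely nonzero (guaranteed since $\mathrm{int}(C)\neq\emptyset$, so a functional vanishing on $C$ would vanish on an open set and hence be $0$), and that strictness is not lost — separating a point from an open convex set yields a \emph{strict} inequality against the open set but only a weak one against $x$, which is all we need to contradict $\langle x,y\rangle>0$. A secondary technical point is justifying $C^{\prime\prime}=C$: this is the Bipolar theorem for cones, valid because $C$ is $\sigma(X,Y)$-closed and convex and contains $0$, and I would simply cite it (e.g.\ from the same functional-analysis references used earlier in the paper) rather than reprove it.
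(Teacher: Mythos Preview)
Your treatment of \ref{item01:base}$\Leftrightarrow$\ref{item02:base} is correct and matches the paper (you apply the Bipolar theorem in $Y$ for \ref{item02:base}$\Rightarrow$\ref{item01:base}, while the paper routes through Corollary~\ref{cor:uniquenessconedualclosedconvex}; both are fine). Your argument for \ref{item01:base}$\Rightarrow$\ref{item03:base} is also correct and essentially the paper's: balanced neighborhoods for ``$\subseteq$'' and a separation argument (the paper cites the Interior Separating Hyperplane Theorem) for ``$\supseteq$''.

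The genuine gap is that you never prove \ref{item03:base}$\Rightarrow$\ref{item01:base}/\ref{item02:base}. Your plan lists only \ref{item02:base}$\Rightarrow$\ref{item03:base}, and the sentence ``by the already-proved equivalence $C=D'$'' is a misreading: the equivalence \ref{item01:base}$\Leftrightarrow$\ref{item02:base} does \emph{not} assert that $C=D'$ holds --- it only says this holds \emph{iff} $D=C'$ holds. From that point on you are tacitly assuming \ref{item02:base}, so the whole second paragraph establishes only the forward implication. A telling symptom: your argument never uses the hypothesis that $D'$ has nonempty interior, yet this hypothesis is exactly what drives \ref{item03:base}$\Rightarrow$\ref{item02:base}. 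The paper's argument for that direction is: set $E:=D'$; by \ref{item01:base}$\Leftrightarrow$\ref{item02:base} applied to the pair $(E,D)$ one has $E'=D$; since $E$ has nonempty interior by hypothesis, the already-proved \ref{item01:base}$\Rightarrow$\ref{item03:base} applied to $(E,E')$ gives $\mathrm{int}(E)=\{x:\forall y\in D\setminus\{0\},\ \langle x,y\rangle>0\}$; assumption \ref{item03:base} says this set is $\mathrm{int}(C)$, so $\mathrm{int}(C)=\mathrm{int}(E)$, and then Lemma~\ref{lem:convextvsclusreinterior} yields $C=\mathrm{cl}(\mathrm{int}(C))=\mathrm{cl}(\mathrm{int}(E))=E=D'$.
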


The proof of  \ref{item01:base} $\implies$ \ref{item03:base} can be found in \cite[Lemma 2.17]{MR2317344}, and the implication \ref{item01:base} $\implies$ \ref{item02:base}, which 
can be rephrased as \textquotedblleft every closed convex cone $C\subseteq X$ is represented by the family $\{\{y\}: y \in C^\prime\}$,\textquotedblright\ corresponds to the known fact
\begin{equation}\label{eq:CCprimeprime}
C=\{x\in X: \forall y \in C^\prime, \,\, \langle x,y\rangle \ge 0\}.
\end{equation}
We include their proofs for the sake of completeness. 
Before we proceed to the proof of Theorem \ref{thm:dualdualcone}, we recall the following lemma:
\begin{lem}\label{lem:convextvsclusreinterior}
Let $S$ be a convex set with nonempty interior in a tvs. Then $\mathrm{cl}(\mathrm{int}(S))=\mathrm{cl}(S)$ and $\mathrm{int}(\mathrm{cl}(S))=\mathrm{int}(S)$.
\end{lem}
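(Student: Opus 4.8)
The plan is to derive both identities from a single \emph{segment lemma}: if $S\subseteq X$ is convex, $a\in\mathrm{int}(S)$ and $b\in\mathrm{cl}(S)$, then $(1-t)a+tb\in\mathrm{int}(S)$ for every $t\in[0,1)$. I would establish this first. Since $a\in\mathrm{int}(S)$, pick a neighborhood $U$ of $0$ with $a+U+U\subseteq S$. Fix $t\in[0,1)$ and set $c:=(1-t)a+tb$; choose a neighborhood $V$ of $0$ with $\tfrac{t}{1-t}V\subseteq U$ (possible by continuity of scalar multiplication) and, using $b\in\mathrm{cl}(S)$, a point $b'\in S$ with $b-b'\in V$. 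Then for every $w$ in the neighborhood $W:=(1-t)U$ of $0$ one has
\[
c+w=(1-t)\!\left(a+\tfrac{w}{1-t}+\tfrac{t}{1-t}(b-b')\right)+t\,b',
\]
where the point inside the parentheses lies in $a+U+U\subseteq S$ and $b'\in S$; by convexity of $S$ it follows that $c+w\in S$. Hence $c+W\subseteq S$, i.e. $c\in\mathrm{int}(S)$.

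Granting the segment lemma, the identity $\mathrm{cl}(\mathrm{int}(S))=\mathrm{cl}(S)$ is obtained as follows. The inclusion $\subseteq$ is immediate from $\mathrm{int}(S)\subseteq S$. For the reverse, fix $b\in\mathrm{cl}(S)$ and some $a\in\mathrm{int}(S)$ (which exists by hypothesis). Given any neighborhood $N$ of $b$, write $N=b+V'$ with $V'$ a neighborhood of $0$; since $(1-t)a+tb-b=(1-t)(a-b)\to 0$ as $t\to 1^-$, there is $t\in[0,1)$ with $(1-t)a+tb\in N$, and this point lies in $\mathrm{int}(S)$ by the segment lemma. Thus $N\cap\mathrm{int}(S)\neq\emptyset$, proving $b\in\mathrm{cl}(\mathrm{int}(S))$.

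For $\mathrm{int}(\mathrm{cl}(S))=\mathrm{int}(S)$, the inclusion $\supseteq$ follows from $S\subseteq\mathrm{cl}(S)$. For $\subseteq$, let $b\in\mathrm{int}(\mathrm{cl}(S))$ and fix $a\in\mathrm{int}(S)$. The map $\varepsilon\mapsto b+\varepsilon(b-a)=(1+\varepsilon)b-\varepsilon a$ is continuous and equals $b$ at $\varepsilon=0$, so for all sufficiently small $\varepsilon>0$ the point $c:=(1+\varepsilon)b-\varepsilon a$ belongs to $\mathrm{cl}(S)$. A direct computation gives $b=\tfrac{\varepsilon}{1+\varepsilon}a+\tfrac{1}{1+\varepsilon}c$, which is of the form $(1-t)a+tc$ with $t=\tfrac{1}{1+\varepsilon}\in(0,1)$, $a\in\mathrm{int}(S)$ and $c\in\mathrm{cl}(S)$; the segment lemma then yields $b\in\mathrm{int}(S)$.

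The only genuinely delicate point is the compatible choice of the neighborhoods $U$, $V$, $W$ of $0$ in the segment lemma; the rest reduces to continuity of affine maps together with the definition of the closure, so I do not expect any further obstacle.
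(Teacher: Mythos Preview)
Your argument is correct. The paper does not supply its own proof of this lemma; it simply refers the reader to \cite[Lemma~5.28]{MR2378491} (Aliprantis--Border). What you have written is precisely the standard proof one finds there: the segment lemma (interior point plus closure point gives interior points along the open segment) is the key step, and both identities drop out of it exactly as you describe. Your verification of the segment lemma via the decomposition
\[
c+w=(1-t)\Bigl(a+\tfrac{w}{1-t}+\tfrac{t}{1-t}(b-b')\Bigr)+t\,b'
\]
is clean and the neighborhood bookkeeping is handled correctly (including the implicit use that $-V$ is again a neighborhood of $0$ when selecting $b'\in S\cap(b-V)$). There is nothing to add.
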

\begin{proof}
See \cite[Lemma 5.28]{MR2378491}. 
\end{proof}

\begin{proof}
[Proof of Theorem \ref{thm:dualdualcone}]
\ref{item01:base} $\implies$ \ref{item02:base}. 
Fix a vector $y_0$ in the one-sided polar 
$$
C^\bullet:=\{y \in Y: \forall x \in C,\,\, \langle x,y\rangle \le 1\}.
$$
Since $C$ is a cone, then $\langle \lambda x,y_0\rangle \le 1$ for all $x \in C$ and real $\lambda > 0$. 
Therefore $\langle x,y_0\rangle \le 0$ for all $x \in C$, hence $C^\bullet=-C^\prime$ and, thanks to the Bipolar theorem, see e.g. \cite[Theorem 8.12]{MR2317344},  
we conclude that 
$$
C=C^{\bullet\bullet}=C^{\prime\prime}=D^\prime.
$$
%
%

\medskip

\ref{item02:base} $\implies$ \ref{item01:base}. 
Suppose that $C=D^\prime$. By the previous point, we know that $C=C^{\prime\prime}$. Therefore $D=C^\prime$ by Corollary \ref{cor:uniquenessconedualclosedconvex}. 

\medskip

Hereafter, suppose that $C$ has nonempty interior with respect to $\tau$. 

\medskip

\ref{item01:base} $\implies$ \ref{item03:base}. 
Pick a nonzero vector $x \in \mathrm{int}(C)$ and a functional $y \in C^\prime$. Since $x$ is an interior point, there exists a balanced neighborhood $U$
 of $0$ such that $x+U \subseteq \mathrm{int}(C)$. 
By the definition of the dual cone, we have $\langle x+u,y\rangle \ge 0$ for all $u \in U$. 
Since $U=-U$, it follows that $\langle u,y\rangle=0$ for all $u \in U$, which implies that $y=0$. This proves that 
$$
\mathrm{int}(C)\subseteq \{x \in X: \forall y \in C^\prime\setminus \{0\}, \,\, \langle x,y\rangle >0\}. 
$$

Conversely, pick a nonzero vector $x_0$ such that $\langle x_0,y\rangle>0$ for all nonzero $y \in C^\prime$. 
If $x_0\notin \mathrm{int}(C)$, it would follow by the Interior Separating Hyperplane Theorem, see e.g. \cite[Theorem 8.16]{MR2317344}, that there exist a nonzero functional $y_0 \in Y$ and a real $\alpha \in \mathbf{R}$ such that 
$$
\forall x \in \mathrm{cl}(\mathrm{int}(C)), \quad 
\langle x_0,y_0\rangle \le \alpha \le \langle x,y_0\rangle. 
$$
Since $\mathrm{cl}(\mathrm{int}(C))=C$, by Lemma \ref{lem:convextvsclusreinterior}, and $C$ is a cone, we obtain that $\alpha \le 0$. 
Hence $y_0$ belongs to $C^\prime$ and $\langle x_0,y_0\rangle \le 0$. However, this contradicts the standing assumption $\langle x_0,y\rangle>0$ for all nonzero $y \in C^\prime$, which shows the opposite inclusion. 

\medskip

\ref{item03:base} $\implies$ \ref{item02:base}. 
Set $E:=D^\prime$. 
Thanks to Lemma \ref{lem:convextvsclusreinterior}, the equivalence \ref{item01:base} $\Longleftrightarrow$ \ref{item02:base}, and the implication \ref{item01:base} $\implies$ \ref{item03:base}, we obtain that 
\begin{displaymath}
\begin{split}
C
=\textnormal{cl}\left(\textnormal{int}\left(C\right)\right)
&=\textnormal{cl}\left(\left\lbrace x\in X:\forall y\in D\setminus \{0\},\,\, \langle x,y \rangle >0 \right\rbrace\right)   \\ 
&=\textnormal{cl}\left(\left\lbrace x\in X:\forall y\in E^\prime \setminus \{0\},\,\, \langle x,y \rangle >0 \right\rbrace\right)   \\
&=\textnormal{cl}(\mathrm{int}(E))=E=D^\prime,   \\
\end{split}
\end{displaymath}
which completes the proof. 
\end{proof}

In particular, Theorem \ref{thm:dualdualcone} shows that the interior of $C$ is the same for all consistent locally convex topologies $\tau$ on $X$. This is analogous to Mackey's theorem, which states that all consistent topologies have the same closed convex sets, see e.g. \cite[Theorem 8.9]{MR2317344}. 

At this point, we characterize (proper) open convex cones:
\begin{prop}\label{prop:openconvex}
Suppose that $X$ is a normed space and let $C\subseteq X$ be a proper subset. Then $C$ is a norm-open convex cone if and only if there exists a nonempty 
$\sigma(Y,X)$-compact $U\subseteq Y$ such that 
\begin{equation}\label{eq:openconvexrepresentation}
C=\{x\in X:\forall y \in U, \,\, \langle x,y\rangle>0\}.
\end{equation}

In addition, if $V\subseteq Y$ is another nonempty $\sigma(Y,X)$-compact set with the same property, then 
$
\overline{\mathrm{co}}(\mathrm{cone}(U))=\overline{\mathrm{co}}(\mathrm{cone}(V)).
$
\end{prop}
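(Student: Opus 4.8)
The plan is to prove the two implications of the stated equivalence and then the uniqueness clause; throughout, $\tau$ denotes the norm topology on $X$, which is consistent since $(X,\|\cdot\|)^\prime=X^\prime=Y$. \emph{Sufficiency} is the routine direction: if $C=\{x\in X:\forall y\in U,\ \langle x,y\rangle>0\}$ for a nonempty $\sigma(Y,X)$-compact $U$, then $C$ is a convex cone by bilinearity and strictness of the inequality, and for norm-openness one fixes $x_0\in C$, uses that the $\sigma(Y,X)$-continuous map $y\mapsto\langle x_0,y\rangle$ attains on the compact set $U$ a minimum $m>0$, and that $U$ is norm-bounded by some $M$; then $\langle x,y\rangle\ge m-M\|x-x_0\|>0$ for all $y\in U$ whenever $\|x-x_0\|<m/M$, so the open ball of radius $m/M$ about $x_0$ is contained in $C$.

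For \emph{necessity}, let $C$ be a proper norm-open convex cone. Since $C$ is nonempty (it is a cone) and open, $0\notin C$, as otherwise $C$ would be an open pointed cone, hence all of $X$. Introducing the closed convex cone $\overline C:=\mathrm{cl}(C)$, Lemma \ref{lem:convextvsclusreinterior} applied to $C$ gives $\mathrm{int}(\overline C)=\mathrm{int}(C)=C$, and in particular $\overline C\ne X$. Applying Theorem \ref{thm:dualdualcone} to $\overline C$ and $D:=(\overline C)^\prime$ (note $\overline C$ has nonempty $\tau$-interior) yields
$$
C=\mathrm{int}(\overline C)=\{x\in X:\forall y\in D\setminus\{0\},\ \langle x,y\rangle>0\},
$$
and $D\ne\{0\}$, for otherwise $C=\mathrm{int}(D^\prime)=X$. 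Now fix $x_0\in C$ and $\varepsilon>0$ with $x_0+u\in\overline C$ whenever $\|u\|\le\varepsilon$, and set
$$
U:=\{y\in D:\langle x_0,y\rangle=1\}.
$$
By the displayed identity every nonzero $y\in D$ satisfies $\langle x_0,y\rangle>0$, so $U$ is a base of the cone $D$: it is nonempty, $\mathrm{cone}(U)=D\setminus\{0\}$, and hence $C=\{x:\forall y\in U,\ \langle x,y\rangle>0\}$ (since $\langle x,\lambda y\rangle>0\iff\langle x,y\rangle>0$ for $\lambda>0$). Finally, $U$ is $\sigma(Y,X)$-closed, being the intersection of the weak$^\star$-closed cone $D$ with the weak$^\star$-closed hyperplane $\{\langle x_0,\cdot\rangle=1\}$, and it is norm-bounded by $1/\varepsilon$ (for $y\in U$ and $\|u\|\le\varepsilon$, the relation $x_0+u\in\overline C$ forces $\langle u,y\rangle\ge-1$); by the Banach--Alaoglu theorem, $U$ is then a weak$^\star$-closed subset of a weak$^\star$-compact ball, hence $\sigma(Y,X)$-compact, as required.

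For the \emph{uniqueness} clause it suffices to show that \emph{every} $\sigma(Y,X)$-compact $U$ representing $C$ satisfies $\overline{\mathrm{co}}(\mathrm{cone}(U))=(\overline C)^\prime$, the right-hand side being independent of $U$. Write $K:=\overline{\mathrm{co}}(\mathrm{cone}(U))$, a weak$^\star$-closed convex cone with $0\in K$. The inclusion $K\subseteq(\overline C)^\prime$ is immediate: for $x\in C$ one has $\langle x,\cdot\rangle>0$ on $U$, hence $\ge 0$ on $\mathrm{co}(\mathrm{cone}(U))$ and, by continuity, on $K$. For the reverse inclusion I would argue by contradiction: if $y_1\in(\overline C)^\prime\setminus K$, strongly separate $y_1$ from the weak$^\star$-closed convex cone $K$ in the dual pair $(Y,X)$ to obtain $x_1\in X$ with $\langle x_1,\cdot\rangle\ge 0$ on $K$ (in particular on $U$) and $\langle x_1,y_1\rangle<0$, using $0\in K$ and the cone property of $K$; then, fixing $x_0\in C$ and using that $y\mapsto\langle x_0,y\rangle$ has a positive minimum on the compact set $U$, the perturbed vector $x_\varepsilon:=x_1+\varepsilon x_0$ lies in $\{x:\forall y\in U,\ \langle x,y\rangle>0\}=C\subseteq\overline C$ for every $\varepsilon>0$, whence $\langle x_\varepsilon,y_1\rangle\ge 0$ because $y_1\in(\overline C)^\prime$; but $\langle x_\varepsilon,y_1\rangle\to\langle x_1,y_1\rangle<0$ as $\varepsilon\to 0^+$, a contradiction. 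Applying $K=(\overline C)^\prime$ to both $U$ and $V$ gives the assertion.

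The direct verifications in the sufficiency direction are routine; the real work lies in two places. In the necessity direction, the delicate point is that the constructed base $U$ is not merely weak$^\star$-closed but weak$^\star$-compact --- this is precisely where the norm enters, via a norm ball inside $\overline C$ and Banach--Alaoglu, whereas over a general locally convex $X$ one would only obtain closedness. In the uniqueness clause, the subtlety, which I expect to be the main obstacle, is that the strict inequalities defining a representation are not stable under passing to the closed convex conical hull, so one cannot simply substitute $K$ for $U$ in the defining condition; the perturbation $x_1\rightsquigarrow x_1+\varepsilon x_0$, which upgrades the weak inequality on $U$ to a strict one while keeping $\langle\cdot,y_1\rangle$ negative in the limit, is the device that circumvents this.
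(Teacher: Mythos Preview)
Your sufficiency argument has a genuine gap: you assert that the $\sigma(Y,X)$-compact set $U$ is ``norm-bounded by some $M$,'' but this relies on the uniform boundedness principle and hence on completeness of $X$. For a merely normed space it can fail --- take $X=c_{00}$ with the sup norm, $Y=\ell^1$, and $U=\{e_1\}\cup\{e_1+ne_n:n\ge 2\}$, which is $\sigma(\ell^1,c_{00})$-compact (since $e_1+ne_n\to e_1$ coordinatewise) but norm-unbounded; the corresponding set $C$ contains $e_1$ yet not $e_1-\tfrac1k e_k$ for any $k\ge 2$, so it is not norm-open. The paper's proof of this direction appeals to Lemma~\ref{lem:intersectionopen} with the roles of $X$ and $Y$ interchanged, and the proof of that lemma likewise needs $U$ norm-bounded after the swap, so the issue is not peculiar to your write-up; the paper itself notes in Remark~\ref{rmk:closedconvexforclosedcones_Banach} that weak$^\star$-compact subsets of $X'$ need not be norm-bounded when $X$ is incomplete.

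Your \textsc{Only If} and uniqueness arguments are correct and differ from the paper's. For necessity the paper takes $U=S_Y\cap(\mathrm{cl}(C))'$ and asserts that the unit sphere $S_Y$ is weak$^\star$-compact by Alaoglu; this is not true in general (only the closed ball is, and for instance with $X=c_0$ one can have $\tfrac12(e_1+e_n)\in D\cap S_{\ell^1}$ converging weak$^\star$ to $\tfrac12 e_1\notin S_{\ell^1}$). Your base $U=\{y\in D:\langle x_0,y\rangle=1\}$, shown norm-bounded via an interior ball at $x_0$ and weak$^\star$-closed as an intersection of weak$^\star$-closed sets, is the cleaner and rigorous construction. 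For uniqueness, the paper shows $U'=V'$ directly by the perturbation $\alpha x+(1-\alpha)x_0$ with $\alpha\to 1^-$ and then invokes Corollary~\ref{cor:uniquenessconedualclosedconvex}, whereas you identify the invariant $\overline{\mathrm{co}}(\mathrm{cone}(U))=(\mathrm{cl}(C))'$ via separation plus the perturbation $x_1+\varepsilon x_0$; the two routes rest on the same idea of nudging a borderline vector into the open cone and are essentially equivalent.
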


In the proof of Proposition \ref{prop:openconvex}, we will need the following standard lemma. 

\begin{lem}\label{lem:intersectionopen}
Let $X$ be a normed space and fix a nonempty $\sigma(X,Y)$-compact set $U\subseteq X$. 
Then $\bigcap_{x \in U}G_x$ is norm-open.
\end{lem}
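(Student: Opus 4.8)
The plan is to show that $\bigcap_{x \in U} G_x$, which by definition equals $\{y \in Y : \langle x, y \rangle < 0 \text{ for all } x \in U\}$, is open in the weak$^\star$ topology $\sigma(Y,X)$, and that this set is in fact norm-open when viewed inside the normed space $X$ in the role played by $Y$ — wait, here $U \subseteq X$ and the intersection lives in $Y = X'$, so ``norm-open'' refers to the dual norm on $X'$. The key observation is that $\sigma(X,Y)$-compactness of $U$ lets us upgrade the pointwise strict inequality to a uniform one. Concretely, fix $y_0 \in \bigcap_{x \in U} G_x$. The map $x \mapsto \langle x, y_0 \rangle$ is $\sigma(X,Y)$-continuous, hence attains its supremum over the compact set $U$; call it $-\varepsilon$, so $\varepsilon > 0$ and $\langle x, y_0 \rangle \le -\varepsilon$ for every $x \in U$.

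Next I would produce a norm-neighborhood of $y_0$ staying inside the intersection. For this I need $U$ to be norm-bounded: a $\sigma(X,Y)$-compact set in a normed space is weakly compact, hence bounded (its image under every continuous functional is bounded, so by the uniform boundedness principle it is norm-bounded — or one simply notes weakly compact implies weakly bounded implies bounded). Let $M := \sup_{x \in U} \|x\| < \infty$; if $M = 0$ then $U = \{0\}$ and the intersection is $\{y : \langle 0, y\rangle < 0\} = \emptyset$, which is open, so assume $M > 0$. Now if $\|y - y_0\|_{X'} < \varepsilon / M$, then for every $x \in U$ we have $\langle x, y \rangle = \langle x, y_0 \rangle + \langle x, y - y_0 \rangle \le -\varepsilon + \|x\| \|y - y_0\|_{X'} < -\varepsilon + M \cdot (\varepsilon/M) = 0$. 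Hence the entire open ball $B(y_0, \varepsilon/M)$ in the dual norm lies in $\bigcap_{x \in U} G_x$, proving the set is norm-open.

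I expect the main (modest) obstacle to be the bookkeeping around which topology is which: $U$ sits in $X$ with its $\sigma(X,Y)$-topology, while the conclusion is about the norm topology on $Y = X'$, and one must be careful that ``$\sigma(X,Y)$-compact'' is being invoked twice — once for continuity of $x \mapsto \langle x, y_0\rangle$ to extract $\varepsilon$, and once (via boundedness) to get the uniform constant $M$. Since the paper's standing convention equips $X$ with $\sigma(X,Y)$, the functionals $x \mapsto \langle x, y\rangle$ for $y \in Y$ are precisely the continuous linear functionals on $X$, so $\sigma(X,Y)$-compactness really is weak compactness and the boundedness argument goes through cleanly. No separate lemma beyond the uniform boundedness principle is needed, and the whole argument is a two-line $\varepsilon$–$\delta$ estimate once the uniform gap and the norm bound are in hand.
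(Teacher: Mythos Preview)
Your argument is correct and follows essentially the same route as the paper: both use $\sigma(X,Y)$-compactness of $U$ to obtain a uniform negative upper bound $-\varepsilon$ for $\langle x,y_0\rangle$ over $x\in U$, and norm-boundedness of $U$ (via the uniform boundedness principle) to turn this into a norm-neighborhood. The only cosmetic difference is packaging: the paper defines the support function $f(y):=\max_{x\in U}\langle x,y\rangle$, shows it is sublinear and norm-continuous at $0$ (hence everywhere), and writes $G=f^{-1}((-\infty,0))$, whereas you carry out the equivalent $\varepsilon$--$\delta$ estimate directly at each point $y_0$.
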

\begin{proof}
Set $G:=\bigcap_{x \in U}G_x$ and suppose that $G\neq \emptyset$, otherwise there is nothing to prove. 
Since each section $\langle \cdot,y \rangle$ is $\sigma(X,Y)$-continuous, by Weierstrass' theorem it is possible to define the function $f: Y\to \mathbf{R}$ by 
$$
\forall y \in Y, \quad 
f(y):=\max_{x \in U}\,\langle x,y\rangle.
$$
Note that $f(y)<0$ if and only if $y\in G$. 
Now, suppose that a sequence $\left(y_n\right)_{n\in \mathbf{N}}$ is norm-convergent to $0 \in Y$. 
Since $f(0)=0$, we have 
$$
\forall n \in \mathbf{N}, \quad 
\left|f(y_n)\right|\leq 
\max\limits_{x\in U}\left|\langle x,y_n\rangle\right|\leq 
\lVert y_n \rVert \cdot \max\limits_{x\in U}\,\lVert x \rVert,
$$
hence $\lim_n f(y_n)=0$, i.e., $f$ is norm-continuous at $0$. 
Thus, since $f$ is sublinear, it must be norm-continuous. 
Therefore $G=f^{-1}\left((-\infty,0)\right)$ is norm-open.
\end{proof}

\begin{proof}
[Proof of Proposition \ref{prop:openconvex}]
\textsc{If part.} 
Let $U$ be a nonempty $\sigma(Y,X)$-compact $U\subseteq Y$ such that \eqref{eq:openconvexrepresentation} holds. It is straightforward to see that $C$ is a convex cone. Moreover, it is open in the norm topology 
by Lemma \ref{lem:intersectionopen}.

\medskip

\textsc{Only If part.} 
Thanks to Lemma \ref{lem:convextvsclusreinterior}, we have $C=\mathrm{int}(\mathrm{cl}(C))$ in the norm topology. 
Applying the equivalence \ref{item01:base} $\Longleftrightarrow$ \ref{item03:base} in Theorem \ref{thm:dualdualcone} to the cone $\mathrm{cl}(C) \in \mathscr{C}(X)$, we obtain that 
$$
C=\left\{x\in X: \forall y \in (\mathrm{cl}(C))^\prime\setminus \{0\}, \,\, \langle x,y\rangle>0\right\}.
$$
At this point, let $S_Y:=\{y \in Y: \|y\|=1\}$ be the unit sphere of the topological dual of $X$, which is $\sigma(Y,X)$-compact by Alaoglu--Bourbaki's theorem, see e.g. \cite[Theorem 8.8]{MR2317344}. Hence \eqref{eq:openconvexrepresentation} holds with the $\sigma(Y,X)$-compact set $U:=S_Y\cap (\mathrm{cl}(C))^\prime$. 

\medskip

For the second part, let $U$ and $V$ be nonempty $\sigma(Y,X)$-compact set which represent the norm-open convex cone $C$ as in \eqref{eq:openconvexrepresentation}. Fix also $x_0 \in C$ and a vector $x \in X$ such that $\langle x,y\rangle \ge 0$ for all $y \in U$. Then $\alpha x+(1-\alpha)x_0 \in C$ for all $\alpha \in [0,1)$. Since both $U$ and $V$ represent $C$, it follows that $\langle \alpha x+(1-\alpha)x_0,y\rangle >0$ for all $\alpha \in [0,1)$. By the continuity of the duality map and taking the limit $\alpha \to 1^-$, we obtain that $\langle x,y\rangle \ge 0$ for all $y \in V$. This implies that $U^\prime \subseteq V^\prime$. With a symmetric argument, we have $V^\prime\subseteq U^\prime$. Therefore 
$$
(\overline{\mathrm{co}}(\mathrm{cone}(U)))^\prime=U^\prime=V^\prime=(\overline{\mathrm{co}}(\mathrm{cone}(V)))^\prime. 
$$
The conclusion follows by Corollary \ref{cor:uniquenessconedualclosedconvex}. 
In particular, $\overline{\mathrm{co}}(\mathrm{cone}(U))$ has to be equal to $(\mathrm{cl}(C))^\prime$.
\end{proof}

Finally, everything is ready to provide a representation of arbitrary closed cones in Banach spaces: 
\begin{thm}\label{thm:closedcone}
Suppose that $X$ is a Banach space and let $C\subseteq X$ be a subset. Then the following are equivalent\emph{:}
\begin{enumerate}[label={\rm (\roman{*})}]
\item \label{item:closed1} $C$ is a norm-closed cone\emph{;} 
\item \label{item:closed2} $C$ is represented by a family $\mathscr{K}$ of nonempty $\sigma(Y,X)$-compact sets\emph{;}
\item \label{item:closed3} $C$ is represented by a family $\mathscr{K}$ of nonempty $\sigma(Y,X)$-compact convex sets\emph{.}
\end{enumerate}
\end{thm}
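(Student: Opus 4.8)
The plan is to prove the cycle $\ref{item:closed3} \implies \ref{item:closed2} \implies \ref{item:closed1} \implies \ref{item:closed3}$, where the first implication is trivial and the second is the easy direction; the real content is $\ref{item:closed1}\implies\ref{item:closed3}$, i.e.\ producing, for an arbitrary norm-closed cone $C$, a representing family of nonempty $\sigma(Y,X)$-compact convex sets. For $\ref{item:closed2}\implies\ref{item:closed1}$: if $C$ is represented by a family $\mathscr{K}$ of $\sigma(Y,X)$-compact sets, then $X\setminus C=\bigcup_{K\in\mathscr{K}}\{x\in X:\forall y\in K,\ \langle x,y\rangle<0\}=\bigcup_{K\in\mathscr{K}}\bigcap_{y\in K}G_y$, where now $G_y:=\{x\in X:\langle x,y\rangle<0\}$; each set $\bigcap_{y\in K}G_y$ is norm-open by Lemma \ref{lem:intersectionopen} (applied with the roles of $X$ and $Y$ interchanged, using that $K$ is $\sigma(Y,X)$-compact), so $X\setminus C$ is norm-open and $C$ is norm-closed; that $C$ is a cone is immediate from \eqref{eq:claimedequalityconedefi}.

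For the main implication $\ref{item:closed1}\implies\ref{item:closed3}$, the idea is to build the representing family set by set, indexed by the points $x_0\notin C$, in the spirit of $\mathscr{K}(C)$ but replacing each unbounded set $G_{x_0}$ by a compact convex ``slice'' that still witnesses $x_0\notin C$ while not excluding any point of $C$. Fix $x_0\in X\setminus C$. Since $C$ is norm-closed and a cone, $\mathrm{cl}(\mathrm{cone}(C\cup\{0\}))$ is a closed convex... no — $C$ need not be convex, so instead I argue directly: the complement $X\setminus C$ is norm-open, so there is $r>0$ with the open ball $B(x_0,r)\subseteq X\setminus C$. The set $K_{x_0}:=\{y\in Y:\langle x_0,y\rangle\le -1,\ \|y\|\le 1/r\}$ is nonempty (by Hahn–Banach there is $y$ with $\|y\|=1/\|x_0\|\cdot$something and $\langle x_0,y\rangle<0$, then rescale; one checks the two constraints are simultaneously satisfiable precisely because $B(x_0,r)\cap C=\emptyset$ forces $\mathrm{dist}(x_0,C\cup\{0\})$-type bounds), it is norm-bounded, weak$^\star$-closed and convex, hence $\sigma(Y,X)$-compact by Alaoglu–Bourbaki \cite[Theorem 8.8]{MR2317344}. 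For every $y\in K_{x_0}$ we have $\langle x_0,y\rangle\le -1<0$, so $x_0\notin C_{\{K_{x_0}\}}$. Conversely I must check $C\subseteq C_{\{K_{x_0}\}}$, i.e.\ every $x\in C$ admits $y\in K_{x_0}$ with $\langle x,y\rangle\ge 0$; this is where I separate $x_0$ from the closed convex set $\overline{\mathrm{co}}(\mathrm{cone}(\{x\}))\cup\{0\}$ — actually from the ray $\{\lambda x:\lambda\ge0\}$ — getting $y$ with $\langle x_0,y\rangle<0\le\langle \lambda x,y\rangle$, and then rescale $y$ so that $\langle x_0,y\rangle\le -1$ and, crucially, control $\|y\|$: the separation can be done with $\langle x_0,y\rangle\le -1$ and $\|y\|\le 1/r$ simultaneously because $B(x_0,r)$ avoids the ray through $x$, so that $\langle x_0-z,y\rangle\ge 0$ has to fail uniformly on the ball, pinning $\|y\|$. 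Then $\mathscr{K}:=\{K_{x_0}:x_0\notin C\}$ consists of nonempty $\sigma(Y,X)$-compact convex sets, and $C_{\mathscr{K}}=C$: the inclusion $C\subseteq C_{\mathscr{K}}$ holds by the previous sentence, and $C_{\mathscr{K}}\subseteq C$ since any $x_0\notin C$ is excluded by its own $K_{x_0}$.

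The main obstacle I anticipate is the simultaneous quantitative control in the separation step: I need a single functional $y$ that (a) separates $x_0$ from the ray $\mathbf{R}_{\ge0}x$ with a definite gap $\langle x_0,y\rangle\le -1$, and (b) has $\|y\|\le 1/r$ where $r$ is a fixed radius with $B(x_0,r)\cap C=\emptyset$ — the point being that $r$ must work uniformly over all $x\in C$, which it does because $B(x_0,r)$ avoids all of $C$ at once. The clean way to get both is to apply the strong separation / Hahn–Banach in the form giving $\langle x_0,y\rangle=-1$ and $\inf_{z\in B(x_0,r)}\langle z,y\rangle\ge$ (something forcing $\|y\|\le1/r$), exploiting that $0$ and the ray both lie outside $B(x_0,r)$; alternatively invoke that $\mathrm{dist}(x_0,\{0\}\cup\mathbf{R}_{\ge0}x)\ge r$ and use the Banach-space duality $\mathrm{dist}(x_0,K)=\sup\{\langle x_0,y\rangle - \sup_{z\in K}\langle z,y\rangle:\|y\|\le1\}$ for the closed convex cone $K$. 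Once this lemma-sized computation is in place, everything else is bookkeeping. I would also add a short remark that, by Theorem \ref{thm:uniqueness}, the representing family is essentially unique in the sense $\widehat{\mathscr{K}}=\mathscr{K}(C)$, so the compact-convex representation is a genuine refinement of $\mathscr{K}(C)$ and not an ad hoc one.
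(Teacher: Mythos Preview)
Your argument is correct, and the quantitative separation you flag as the main obstacle can be completed cleanly: given $x\in C$ and $x_0\notin C$ with $B(x_0,r)\subseteq X\setminus C$, the ray $R_x:=\{\lambda x:\lambda\ge0\}\subseteq C$ has $\mathrm{dist}(x_0,R_x)\ge r$; the sublinear $1$-Lipschitz function $f(z):=\mathrm{dist}(z,R_x)$ satisfies $f(x_0)\ge r$, and Hahn--Banach extends $t x_0\mapsto t f(x_0)$ to a linear $\ell\le f$ with $\|\ell\|\le1$, $\ell(x_0)\ge r$, $\ell|_{R_x}\le0$. Then $y:=-\ell/r$ lies in $K_{x_0}$ and $\langle x,y\rangle\ge0$, exactly as you need. (Nonemptiness of $K_{x_0}$ also follows, since $0\in C$ forces $\|x_0\|\ge r$.)

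Your route differs from the paper's in both nontrivial implications. For \ref{item:closed1}$\Rightarrow$\ref{item:closed3}, the paper covers $X\setminus C$ by proper open convex cones $D_x=\mathrm{cone}(x+V_x)$, invokes Proposition~\ref{prop:openconvex} to write each $D_x=\{a:\forall y\in U_x,\ \langle a,y\rangle>0\}$ with $U_x$ weak$^\star$-compact, and then applies Krein--\v{S}mulian to pass to $\overline{\mathrm{co}}(U_x)$; your direct construction of $K_{x_0}$ as a slice of the dual ball bypasses both of those ingredients, trading them for the explicit Hahn--Banach computation above. For \ref{item:closed2}$\Rightarrow$\ref{item:closed1}, the paper argues sequentially, using joint $\tau\times\sigma(Y,X)$-continuity of the pairing on $X\times K$; your complement-is-open argument via Lemma~\ref{lem:intersectionopen} is cleaner, but note that the lemma as stated concerns $\sigma(X,X')$-compact sets, whose norm-boundedness is automatic, whereas your swapped application needs $K\subseteq X'$ weak$^\star$-compact to be norm-bounded---this is precisely where completeness of $X$ enters (cf.\ Remark~\ref{rmk:closedconvexforclosedcones_Banach}), so that step should be made explicit rather than left to the role-swap.
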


\begin{proof}
The claim holds if $C=X$ by choosing, e.g., $\mathscr{K}=\{\{0\}\}$, hence suppose hereafter that $C$ is proper. Also, let $\tau$ be the norm topology on $X$

\medskip 

\ref{item:closed1} $\implies$ \ref{item:closed2}. 
Set $D:=X\setminus C$, which is a proper $\tau$-open cone. 
Since $\tau$ is locally convex, for each $x \in D$ there exists a $\tau$-open neighborhood $V_x$ of zero such that $D_x:=\mathrm{cone}(x+V_x)$ is a proper $\tau$-open convex cone contained in $D$. 
In addition, 
$$
D=\bigcup\nolimits_{x \in D}D_x.
$$ 
At this point, it follows by Proposition \ref{prop:openconvex} that, for each $x \in D$, there exists a nonempty $\sigma(Y,X)$-compact set $U_x\subseteq Y$ such that $D_x$ can be written as $\{a \in X: \forall y \in U_x,\,\, \langle a,y\rangle >0\}$.
With these premises, we conclude that
\begin{displaymath}
\begin{split}
    C&=\bigcap\nolimits_{x \in D}(X\setminus D_x)\\
     &=\{a \in X: \forall x \in D, \,\, a\notin D_x\}\\
     &=\{a \in X: \forall x \in D, \exists y \in U_x, \,\,\langle a,y\rangle \le 0\}.
\end{split}
\end{displaymath}
Therefore $C$ is represented by the family $\mathscr{K}:=\{-U_x: x\in D\}$. 

\medskip

\ref{item:closed2} $\implies$ \ref{item:closed3}. 
It is enough to replace each $U_x$ in the above proof with $K_x:=\mathrm{co}(U_x)$ for each $x \in D$, which is $\sigma(Y,X)$-compact for each $x \in D$, thanks to the Krein--\v{S}mulian theorem, see e.g. \cite[Theorem 8.28]{MR2317344}. For, we show that 
\begin{equation}\label{eq:openconvexconeKx}
\forall x \in D, \quad 
D_x=\{a \in X: \forall y \in K_x,\,\, \langle a,y\rangle >0\}
\end{equation}
and repeat verbatim the same argument. 
By the $\sigma(Y,X)$-compactness of $U_x$ and the $\sigma(Y,X)$-continuity of the sections $\langle x,\cdot\rangle$, we can define the positive reals 
$\varepsilon_{x,a}:=\min\nolimits_{y \in U_x}\langle a,y\rangle$ for each $x \in D$ and $a \in D_x$. 
Hence, passing to the closed convex hull, $\langle a,y\rangle \ge \varepsilon_{x,a}$ for all $x \in D$, $a \in D_x$, and $y \in K_x$, implying \eqref{eq:openconvexconeKx}. 

\medskip

\ref{item:closed3} $\implies$ \ref{item:closed2}. This is clear.

\medskip

\ref{item:closed2} $\implies$ \ref{item:closed1}. 
Let $\mathscr{K}$ be a family of nonempty $\sigma(Y,X)$-compact sets which represents $C$. 
%
Pick $K \in \mathscr{K}$ and a sequence $(x_n)_{n \in \mathbf{N}}$ in $C$ which is $\tau$-convergent to some $x \in X$. We claim that $x \in C$. 
For, since $C$ is represented by $\mathscr{K}$, there exists a sequence $(y_n)_{n \in \mathbf{N}}$ in $K$ such that $\langle x_n,y_n\rangle \ge 0$ for all $n \in \mathbf{N}$. 
By the hypothesis that $K$ is $\sigma(Y,X)$-compact, there exists a subsequence $(y_{n_k})_{k \in \mathbf{N}}$ which is $\sigma(Y,X)$-convergent to some $y \in K$. 
Since $K$ is necessarily norm-bounded by \cite[Corollary 2.6.9]{MegginsonBook}, the restriction of the duality map $\langle \cdot, \cdot\rangle$ to $X\times K$ is jointly $\tau\times \sigma(Y,X)$-continuous \cite[Corollary 6.40]{MR2378491}. 
It follows that the subsequence $\left(\langle x_{n_k},y_{n_k}\rangle\right)_{k \in \mathbf{N}}$ converges to $\langle x,y\rangle$. Therefore $\langle x,y\rangle\geq 0$ and, by the arbitrariness of $K$, we get $x \in C$. 
\end{proof}

\begin{rmk}\label{rmk:closedconvexforclosedcones_Banach}
Note that implication \ref{item:closed1} $\implies$ \ref{item:closed2} holds also if $X$ is not necessarily complete. On the other hand, our proof of \ref{item:closed2} $\implies$ \ref{item:closed1} relies on the completeness assumption in applying \cite[Corollary 2.6.9]{MegginsonBook}. Indeed, there exist normed spaces $X$ which admit norm-unbounded $\sigma(Y,X)$-compact subsets of $X^\prime$. 
\end{rmk}


The following corollary provides an abstract representation for certain cones which represent \emph{justifiable preferences}, as defined by Lehrer and Teper in \cite{MR2888839}. Accordingly, it provides a generalization of \cite[Theorem 1]{MR2888839} for Anscombe--Aumann acts and \cite[Theorem 5]{MR3957335} for lotteries. In a sense, it complements the known representation for closed convex cones in \eqref{eq:CCprimeprime}:
\begin{cor}\label{cor:generatingclosedconvexinterior}
Suppose that $X$ is a normed space and let $C\subseteq X$ be a set. 

Then $C$ is a norm-closed cone such that $C\cup (-C)=X$ and $C\setminus (-C)$ is convex if and only if there exists a nonempty $\sigma(Y,X)$-compact set $K\subseteq Y$ for which $C$ is represented by $\{K\}$, i.e., 
\begin{equation}\label{eq:representationexists}
C=\{x \in X: \exists y \in K, \,\, \langle x,y\rangle \ge 0\}.
\end{equation}
\end{cor}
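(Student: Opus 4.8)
The plan is to prove both directions separately, exploiting the structural results already established.

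\medskip

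\textbf{Necessity (forward direction).} Suppose $C$ is a norm-closed cone with $C\cup(-C)=X$ and $C\setminus(-C)$ convex. First I would dispose of degenerate cases: if $C=X$, take $K=\{0\}$; if $C=\{0\}$ or some half-space boundary situation arises, handle it directly. In the generic case, I would apply Theorem \ref{thm:closedcone}: since $C$ is a norm-closed cone in the normed space $X$ (the implication \ref{item:closed1} $\implies$ \ref{item:closed3} holds without completeness by Remark \ref{rmk:closedconvexforclosedcones_Banach}), $C$ is represented by a family $\mathscr{K}$ of nonempty $\sigma(Y,X)$-compact convex sets. The goal is to collapse this family to a single set. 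The key point is that the hypotheses $C\cup(-C)=X$ and convexity of $C\setminus(-C)$ should force, via the uniqueness machinery of Theorem \ref{thm:uniqueness} together with Theorems \ref{thm:propertycompleteness} and \ref{thm:convexcones}, that $\mathscr{K}(C)$ consists of pairwise-intersecting sets arising from an essentially one-dimensional family of directions. Concretely, I expect that the condition $C\cup(-C)=X$ with $C\setminus(-C)$ convex means $C$ is, up to the linear subspace $L:=C\cap(-C)$, a half-space: $C=L\cup\{x: \langle x,y_0\rangle>0\}$ for a single functional $y_0$ on the quotient, or more generally $C$ is determined by ``$\exists y\in K$'' for $K$ a suitable compact set. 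I would then take $K$ to be $S_Y\cap(\text{something built from the dual cone of }-C)$, or more precisely pick $K$ so that $\mathrm{co}(\mathrm{cone}(K))$ recovers the relevant $G_x$'s via Corollary \ref{cor:easyrepresentation}, then invoke compactness of $K=\mathrm{co}(U)$ through Krein--Šmulian as in the proof of Theorem \ref{thm:closedcone}.

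\medskip

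\textbf{Sufficiency (converse direction).} Suppose $C=\{x\in X:\exists y\in K,\,\langle x,y\rangle\ge 0\}$ for a nonempty $\sigma(Y,X)$-compact $K$; i.e., $C$ is represented by the singleton family $\{K\}$. That $C$ is a norm-closed cone is immediate from Theorem \ref{thm:closedcone}, implication \ref{item:closed2} $\implies$ \ref{item:closed1}, but since we are only assuming $X$ normed and not Banach, I would instead argue directly: the cone property is obvious by homogeneity; norm-closedness follows by the same compactness-plus-joint-continuity argument as in that proof (extracting a $\sigma(Y,X)$-convergent subnet of witnesses $y_n\in K$, using norm-boundedness of $K$ and joint continuity of the duality on $X\times K$). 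For $C\cup(-C)=X$: given any $x$, the compact set $K$ either contains a $y$ with $\langle x,y\rangle\ge0$ (so $x\in C$) or $\langle x,y\rangle<0$ for all $y\in K$, in which case $\langle -x,y\rangle>0$ for all $y$, so $-x\in C$, i.e.\ $x\in -C$. For convexity of $C\setminus(-C)$: note $x\in C\setminus(-C)$ iff $\max_{y\in K}\langle x,y\rangle\ge 0$ and $\min_{y\in K}\langle x,y\rangle<0$; actually the cleaner description is $C\setminus(-C)=\{x:\exists y\in K,\langle x,y\rangle\ge0\text{ and }\exists y'\in K,\langle x,y'\rangle<0\}$, and one checks that on this set the first condition is equivalent to $\max_{y\in K}\langle x,y\rangle\ge0$ while membership in $-C$ would force $\max_{y\in K}\langle x,y\rangle\le 0$; so $C\setminus(-C)=\{x:\max_{y\in K}\langle x,y\rangle>0\}\cup(\text{boundary piece})$ — I would verify carefully that this set, or its closure-interior structure, is convex because $x\mapsto\max_{y\in K}\langle x,y\rangle$ is sublinear, hence the superlevel set $\{x:\max_{y\in K}\langle x,y\rangle>0\}$ is the complement of a convex set; combined with Proposition \ref{prop:openconvex} applied to $-\mathrm{int}(\text{stuff})$, convexity should drop out.

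\medskip

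\textbf{Main obstacle.} The hard part will be the necessity direction: extracting a \emph{single} compact convex set from the potentially large family $\mathscr{K}(C)$. The two hypotheses each give a ``pairwise intersection / linear-combination'' constraint (Theorems \ref{thm:propertycompleteness} and \ref{thm:convexcones}), and I expect their conjunction to pin down the geometry tightly — $C\setminus(-C)$ should be an open convex cone in the quotient $X/L$ with $L=C\cap(-C)$ a closed hyperplane-complemented subspace, so Proposition \ref{prop:openconvex} gives a compact generator $U$ for $-(C\setminus(-C))$'s closure in the quotient, and lifting $U$ back to $Y$ and adjoining what is needed to encode $L$ yields $K$. Making the lift and the quotient identification precise, and checking that the resulting $K$ is genuinely $\sigma(Y,X)$-compact (not merely compact in the quotient dual), is where the technical care will be needed; I would lean on Alaoglu--Bourbaki and Krein--Šmulian exactly as in the proofs of Proposition \ref{prop:openconvex} and Theorem \ref{thm:closedcone}.
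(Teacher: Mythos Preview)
Your sufficiency argument contains a genuine error in the convexity step. You write that $x\in C\setminus(-C)$ iff $\max_{y\in K}\langle x,y\rangle\ge 0$ and $\min_{y\in K}\langle x,y\rangle<0$, and then that $x\notin -C$ corresponds to $\exists y'\in K,\ \langle x,y'\rangle<0$. This negation is wrong: $x\in -C$ means $-x\in C$, i.e.\ $\exists y\in K,\ \langle x,y\rangle\le 0$; hence $x\notin -C$ means $\langle x,y\rangle>0$ for \emph{every} $y\in K$. So in fact
\[
C\setminus(-C)=\{x\in X:\forall y\in K,\ \langle x,y\rangle>0\},
\]
which is manifestly convex --- this is exactly the paper's one-line argument. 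Your detour through ``complement of a convex set'' would not have worked: complements of convex sets are not convex. Relatedly, your closedness argument invokes norm-boundedness of $K$ to get joint continuity, but as Remark~\ref{rmk:closedconvexforclosedcones_Banach} notes, $\sigma(Y,X)$-compact subsets of $Y$ need not be norm-bounded when $X$ is merely normed. The paper instead observes that $X\setminus C=\bigcap_{y\in K}\{x:\langle x,y\rangle<0\}$ is norm-open by (the dual form of) Lemma~\ref{lem:intersectionopen}, which avoids the issue entirely.

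For necessity you are making life much harder than necessary. You propose to start from the family $\mathscr{K}$ produced by Theorem~\ref{thm:closedcone} and then collapse it via uniqueness, quotients by $L=C\cap(-C)$, and lifting arguments. The key observation you are missing is that $C\cup(-C)=X$ gives $C\setminus(-C)=X\setminus(-C)$ directly --- no quotient needed. Since $-C$ is norm-closed, $D:=C\setminus(-C)$ is a norm-open convex cone in $X$ itself, so Proposition~\ref{prop:openconvex} applies immediately to give a $\sigma(Y,X)$-compact $U$ with $D=\{x:\forall y\in U,\ \langle x,y\rangle>0\}$. Then $C=X\setminus(-D)=\{x:\exists y\in U,\ \langle x,y\rangle\ge 0\}$, and you are done with $K=U$. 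All the machinery of Theorem~\ref{thm:closedcone}, Theorems~\ref{thm:propertycompleteness}--\ref{thm:convexcones}, quotients, and Krein--\v{S}mulian lifts is unnecessary here.
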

\begin{proof}
First, note that the claim holds if $C=X$ by choosing $K=\{0\}$. Hence, suppose hereafter that $C$ is a proper subset of $X$ and define $D:=C\setminus (-C)$. Note that $D$ is a cone. 

\medskip 

\textsc{If part.} It follows by Lemma \ref{lem:intersectionopen} and duality that $X\setminus C=\bigcap\nolimits_{y \in K}\{x \in X: \langle x,y\rangle <0\}$ is open in the norm topology. Hence $C$ is a norm-closed cone. It is also clear that, if representation \eqref{eq:representationexists} holds, then $\{x,-x\} \cap C\neq \emptyset$ for all $x \in X$, so that $C\cup (-C)=X$. 
Finally, pick $x_1,x_2 \in D$ and $\alpha \in (0,1)$. 
Since $x_1,x_2\notin -C$ and $-C$ is represented by $\{-K\}$ then $\langle x_1,y\rangle >0$ and $\langle x_2,y\rangle >0$ for all $y \in K$. 
Hence $\langle \alpha x_1+(1-\alpha)x_2, y\rangle>0$ for all $y \in K$, which implies that $D$ is convex. 

\medskip

\textsc{Only If part.} Since $C\neq X$, then $D$ is a proper convex cone. In addition, $D=(C\cup (-C))\setminus (-C)=X\cap (-C)^c$ is norm-open. 
Hence it follows by Proposition \ref{prop:openconvex} that there exists a $\sigma(Y,X)$-compact set $U\subseteq Y$ which satisfies 
$$
D=\{x \in X: \forall y \in U, \,\, \langle x,y\rangle >0\}.
$$
Equivalently, $-D=\{x \in X: \forall y \in U, \,\, \langle x,y\rangle <0\}$. 
Note that $\{C,-D\}$ is a partition of $X$. 
Therefore
\begin{displaymath}
\begin{split}
    C=X\setminus (-D)=\{x \in X: \exists y \in U,\,\,\langle x,y\rangle \ge 0\},
\end{split}
\end{displaymath}
completing the proof. 
\end{proof}


\section{Applications in Decision Theory}\label{sec:appl}

The principle underlying the representation results used in quoted results of decision theory is simple: Let $\mathcal{R}$ be a binary relation on a given set $S$,\footnote{As usual, the interpretation is that a binary relation $\mathcal{R}\subseteq S^2$ represents a preference $\succeq$ on $S$ defined by $a\succeq b$ if and only if $(a,b) \in \mathcal{R}$ for all $a,b \in S$.} and suppose also that there exist a topological vector space $X$, a pointed cone $C\subseteq X$, and a function $f: S^2\to X$ such that
\begin{equation}\label{eq:aumanncone}
\forall a,b \in S, \quad \quad (a,b) \in \mathcal{R} \,\,\Longleftrightarrow \,\,
f(a,b) \in C.
\end{equation}
Then, it is enough to represent the cone $C$: Indeed, it follows by Theorem \ref{thm:dualconesgeneralized} that there exists a nonempty family $\mathscr{K}\subseteq \mathcal{P}(Y)$ of nonempty subsets such that 
$$
\forall a,b \in S, \quad \quad (a,b) \in \mathcal{R} \,\,\Longleftrightarrow \,\,
\left(\forall K \in \mathscr{K}, \exists y \in K,\, \langle f(a,b),y\rangle \ge 0\right).
$$
Also, if the cone $C$ can be chosen with additional properties (depending on the assumptions on the binary relation $\mathcal{R}$), then Theorem \ref{thm:dualconesgeneralized} can be replaced with one of its refinements given in Section \ref{sec:additionalproperties}. 

\begin{example}\label{example:aumanncone}
Suppose that $S$ is a convex subset of a vector space and set $X:=\mathrm{span}(S)$. Define also the function $f:S^2\to X$ by $f(a,b):=a-b$. 
Then it is easy to check that a reflexive or irreflexive binary relation $\mathcal{R}$ on $S$ satisfies \eqref{eq:aumanncone} for some cone $C\subseteq X$ provided that $\mathcal{R}$ satisfies the so-called \emph{independence axiom}, i.e., 
\begin{equation}\label{eq:stronindependence}
(a,b) \in \mathcal{R} 
\,\,\Longleftrightarrow \,\,
(\alpha a+(1-\alpha)c,\alpha b+(1-\alpha)c) \in \mathcal{R}
\end{equation}
for all $a,b,c \in S$ and all $\alpha \in (0,1)$. In addition, $C$ is pointed if and only if $\mathcal{R}$ is reflexive. In such case, the cone $C=C_{\mathcal{R}}$ defined by
$$
C_{\mathcal{R}}:=\{\alpha (a-b): \alpha \ge 0, (a,b) \in \mathcal{R}\}
$$
is the \emph{Aumann's cone} generated by $\mathcal{R}$, cf. \cite{MR174381} and \cite[Footnote 15]{Dubra_et_al}. It is worth to remark, following \cite{Dubra_et_al}, that if $C_{\mathcal{R}}$ is closed and convex then the binary relation $\mathcal{R}$ must be transitive and continuous.\footnote{Recall that a binary relation $\mathcal{R}$ on a set $S$ is said to be continuous if, for all nets $\left(x_{\alpha},y_{\alpha}\right)_{\alpha\in A}\in \mathcal{R}^A$ with $\lim_{\alpha \in A}x_{\alpha}=x\in S$ and $\lim_{\alpha \in A}y_{\alpha}=y \in S$, it follows that $(x,y)\in \mathcal{R}$.}
\end{example}

As an immediate application, e.g., we recover \cite[Theorem 1]{MR3957335}:
\begin{cor}\label{cor:firstresultok}
Let $\mathcal{R}$ be a binary relation on the set $\Delta(Z)$ of Borel probability measures on a separable metric space $Z$. 

Then $\mathcal{R}$ is reflexive and satisfies the independence axiom \eqref{eq:stronindependence} if and only if there exists a nonempty family $\mathscr{U}\subseteq \mathcal{P}(\mathrm{C}_b(Z))$ of nonempty subsets such that
\begin{equation}\label{eq:representationdecisiontheory}
(p,q) \in \mathcal{R}\,\,\,\Longleftrightarrow\,\,\,
\forall U \in \mathscr{U}, \exists u \in U, \
\mathbf{E}_p[u] \ge \mathbf{E}_q[u],
\end{equation}
for all $p,q \in \Delta(Z)$, 
where $\mathbf{E}_r[u]:=\int u\,\mathrm{d}r$ for all $r \in \Delta(Z)$ and $u \in \mathrm{C}_b(Z)$. 
\end{cor}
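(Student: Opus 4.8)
The plan is to derive Corollary~\ref{cor:firstresultok} by realizing $\Delta(Z)$ inside a suitable locally convex space and applying Theorem~\ref{thm:dualconesgeneralized} through the scheme of Example~\ref{example:aumanncone}. First I would let $X := \mathrm{span}(\Delta(Z))$ be the linear span of the Borel probability measures on $Z$, viewed as a subspace of the space of finite signed Borel measures, and equip $X$ with the weak topology induced by the pairing $\langle \mu, u\rangle := \int u\, \mathrm{d}\mu$ with $u \in \mathrm{C}_b(Z)$. Since $Z$ is separable metric, $\mathrm{C}_b(Z)$ separates points of the finite signed Borel measures (indeed a finite signed measure is determined by its integrals against bounded continuous functions on a metric space), so $(X, \mathrm{C}_b(Z))$ is a dual pair and $X$ is a locally convex Hausdorff tvs with $X^\prime = \mathrm{C}_b(Z)$ under this pairing; this is exactly the setting required by the earlier results.

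Next I would invoke Example~\ref{example:aumanncone} with $S := \Delta(Z)$ (which is convex) and $f(p,q) := p - q$. The content of that example is that a reflexive binary relation $\mathcal{R}$ on $\Delta(Z)$ satisfies the independence axiom \eqref{eq:stronindependence} if and only if there is a \emph{pointed} cone $C = C_{\mathcal{R}} \subseteq X$ with
$$
(p,q) \in \mathcal{R} \iff p - q \in C.
$$
The forward direction needs the routine verification that $C_{\mathcal{R}} = \{\alpha(p-q) : \alpha \ge 0,\ (p,q)\in\mathcal{R}\}$ really is a cone (closure under positive scalars is immediate) and that $p - q \in C_{\mathcal{R}}$ forces $(p,q)\in\mathcal{R}$; this last implication is where independence is used, since if $\alpha(p'-q') = p - q$ one rewrites $p,q$ as matching mixtures of $p',q'$ with a common third measure and applies \eqref{eq:stronindependence} twice. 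Pointedness of $C_{\mathcal{R}}$ corresponds to $0 = p - p \in C_{\mathcal{R}}$, i.e. reflexivity. The converse direction is the observation that if such a $C$ exists then $\mathcal{R}$ defined through \eqref{eq:aumanncone} automatically inherits reflexivity (from $0 \in C$) and independence (from $C$ being a cone: $p - q \in C \iff \alpha(p-q) = (\alpha p + (1-\alpha)c) - (\alpha q + (1-\alpha)c) \in C$).

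Finally I would apply Theorem~\ref{thm:dualconesgeneralized} to the pointed cone $C_{\mathcal{R}}$: there is a nonempty family $\mathscr{K} = \mathscr{K}(C_{\mathcal{R}}) \subseteq \mathcal{P}(X^\prime) = \mathcal{P}(\mathrm{C}_b(Z))$ of nonempty subsets with $C_{\mathcal{R}} = C_{\mathscr{K}}$, which unwinds, via $\langle p - q, u\rangle = \mathbf{E}_p[u] - \mathbf{E}_q[u]$, to
$$
(p,q) \in \mathcal{R} \iff \forall U \in \mathscr{K},\ \exists u \in U,\ \mathbf{E}_p[u] \ge \mathbf{E}_q[u],
$$
which is \eqref{eq:representationdecisiontheory} with $\mathscr{U} := \mathscr{K}$. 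Conversely, given such a family $\mathscr{U}$, define $\mathscr{K} := \mathscr{U}$ and note $C_{\mathscr{K}}$ is a pointed cone in $X$ representing $\mathcal{R}$ via $f(p,q) = p-q$, so by the converse part of Example~\ref{example:aumanncone} the relation $\mathcal{R}$ is reflexive and satisfies independence. I expect the main obstacle to be the justification that $\mathrm{C}_b(Z)$ is exactly the topological dual of $X = \mathrm{span}(\Delta(Z))$ in the weak topology $\sigma(X, \mathrm{C}_b(Z))$ — more precisely, that every $\sigma(X,\mathrm{C}_b(Z))$-continuous linear functional on $X$ is given by integration against some $u \in \mathrm{C}_b(Z)$ (which is the standard fact that the dual of a space under a weak topology from a separating family $Y$ is $Y$ itself) together with the point-separation claim, which is where the separability and metrizability of $Z$ genuinely enter. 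Everything else is a transcription of Example~\ref{example:aumanncone} and Theorem~\ref{thm:dualconesgeneralized}.
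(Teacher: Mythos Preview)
Your proposal is correct and follows essentially the same route as the paper: set $X=\mathrm{span}(\Delta(Z))=\mathrm{ca}(Z)$, use the dual pair $(X,\mathrm{C}_b(Z))$ with $\langle \mu,u\rangle=\int u\,\mathrm{d}\mu$, invoke Example~\ref{example:aumanncone} to pass to the Aumann cone $C_{\mathcal{R}}$, and then apply Theorem~\ref{thm:dualconesgeneralized}. The only cosmetic difference is that the paper packages the point-separation and duality claims into a citation (the Affine Representation Lemma), whereas you unpack them explicitly; your identified ``obstacle'' about $X^\prime=\mathrm{C}_b(Z)$ under $\sigma(X,\mathrm{C}_b(Z))$ is just the standard fact recalled in Section~\ref{sec:mainresults} for any dual pair, so no additional work is needed there beyond verifying that $\mathrm{C}_b(Z)$ separates $\mathrm{ca}(Z)$.
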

\begin{proof}
The \textsc{If} part is easy to check. 
For the \textsc{Only If part}, 
set $S:=\Delta(Z)$ and let $X:=\mathrm{ca}(Z)=\mathrm{span}(S)$ be the set of countably additive finite signed Borel measures on $Z$. 
It follows by the Affine Representation Lemma, see e.g. \cite[p. 965]{MR3957335}, that $(X,\mathrm{C}_b(Z))$ is a dual pair, with 
$\langle \mu,u\rangle:=\int u\,\mathrm{d}\mu$ for all $\mu \in X$ and $u \in \mathrm{C}_b(Z)$. 
By Example \ref{example:aumanncone}, $\mathcal{R}$ is represented by the pointed cone $C_{\mathcal{R}}\subseteq X$. 
The claim follows by Theorem \ref{thm:dualconesgeneralized} and the linearity of the duality map. 
\end{proof}

As anticipated in Section \ref{sec:additionalproperties}, we obtain a complete characterization of preorders which satisfies the independence axiom, answering the open question \cite[OP2]{MR3957335}.

\begin{cor}\label{cor:transitivity}
Let $\mathcal{R}$ be a binary relation on the set $\Delta(Z)$ of Borel probability measures on a separable metric space $Z$, and define
$$
\mathscr{U}_0:=\left\{\left\{u \in \mathrm{C}_b(Z): \mathbf{E}_p[u]<\mathbf{E}_q[u]\right\}: p,q \in \Delta(Z)\right\}.
$$

Then $\mathcal{R}$ is reflexive, transitive, and satisfies the independence axiom \eqref{eq:stronindependence} if and only if there exists a nonempty family $\mathscr{U}\subseteq \mathcal{P}(\mathrm{C}_b(Z))$ of nonempty subsets such that \eqref{eq:representationdecisiontheory} holds 
for all $p,q \in \Delta(Z)$ and 
$$
\mathcal{P}(U_1)\cap \mathscr{U}=\emptyset 
\text{ and }
\mathcal{P}(U_2)\cap \mathscr{U}=\emptyset 
\,\,\,\,\text{ implies }\,\,\,\,
\mathcal{P}(V)\cap \mathscr{U}=\emptyset
$$
for all $U_1,U_2,V \in \mathscr{U}_0$ with $V\subseteq U_1\cup U_2$.
\end{cor}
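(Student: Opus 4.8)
The plan is to combine Corollary~\ref{cor:firstresultok} with the characterization of convex cones in Theorem~\ref{thm:convexcones}, exactly in the spirit of the ``principle'' laid out at the start of Section~\ref{sec:appl}. First I would set $S:=\Delta(Z)$, $X:=\mathrm{ca}(Z)=\mathrm{span}(S)$, and recall from the proof of Corollary~\ref{cor:firstresultok} that $(X,\mathrm{C}_b(Z))$ is a dual pair under $\langle \mu,u\rangle:=\int u\,\mathrm{d}\mu$, so we may and do work implicitly with the dual pair $(X,Y)$ with $Y=\mathrm{C}_b(Z)$. By Example~\ref{example:aumanncone}, a reflexive $\mathcal{R}$ satisfying the independence axiom \eqref{eq:stronindependence} is represented by its pointed Aumann cone $C_{\mathcal{R}}\subseteq X$ via $(p,q)\in\mathcal{R}\iff f(p,q)=p-q\in C_{\mathcal{R}}$; moreover, again by Example~\ref{example:aumanncone} (the transitivity remark is the relevant direction, but here I need the precise equivalence) $\mathcal{R}$ is \emph{additionally} transitive precisely when $C_{\mathcal{R}}$ is \emph{convex}. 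I would state and quickly verify this last equivalence explicitly: if $C_{\mathcal{R}}$ is convex then for $(p,q),(q,r)\in\mathcal{R}$ we have $p-q,\,q-r\in C_{\mathcal{R}}$, hence $\tfrac12(p-r)=\tfrac12(p-q)+\tfrac12(q-r)\in C_{\mathcal{R}}$, and since $C_{\mathcal{R}}$ is a cone $p-r\in C_{\mathcal{R}}$, i.e.\ $(p,r)\in\mathcal{R}$; conversely, if $\mathcal{R}$ is transitive (and reflexive, satisfying independence) a standard argument using \eqref{eq:stronindependence} with the midpoint $\alpha=\tfrac12$ shows $C_{\mathcal{R}}$ is closed under addition, hence convex.

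Next I would feed this into the dictionary between $C_{\mathcal{R}}$ and its largest representing family $\mathscr{K}(C_{\mathcal{R}})=\{G_\mu:\mu\notin C_{\mathcal{R}}\}$ provided by Theorem~\ref{thm:dualconesgeneralized}. Observe that for $\mu=p-q$ with $p,q\in\Delta(Z)$ one has $G_{p-q}=\{u\in\mathrm{C}_b(Z):\langle p-q,u\rangle<0\}=\{u:\mathbf{E}_p[u]<\mathbf{E}_q[u]\}$, so $\mathscr{U}_0$ is precisely the set of those $G_\mu$ arising from differences of probability measures; and since every nonzero $\mu\in X=\mathrm{span}(S)$ is a positive scalar multiple of such a difference (write $\mu=\lambda(p-q)$ using a Jordan/Hahn-type normalization of the signed measure, which is legitimate because $X=\mathrm{span}(\Delta(Z))$), one gets $\mathscr{G}=\{G_\mu:\mu\in X\setminus\{0\}\}=\mathscr{U}_0$. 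With this identification $\mathscr{G}=\mathscr{U}_0$ in hand, the convexity characterization \ref{item3:convexity} of Theorem~\ref{thm:convexcones} applied to $C=C_{\mathcal{R}}$ says exactly: $C_{\mathcal{R}}$ is convex $\iff$ there is a family $\mathscr{U}\subseteq\mathcal{P}(\mathrm{C}_b(Z))$ representing $C_{\mathcal{R}}$ such that for all $U_1,U_2,V\in\mathscr{U}_0$ with $V\subseteq U_1\cup U_2$, the implication $\mathcal{P}(U_1)\cap\mathscr{U}=\emptyset$ and $\mathcal{P}(U_2)\cap\mathscr{U}=\emptyset \Rightarrow \mathcal{P}(V)\cap\mathscr{U}=\emptyset$ holds. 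Finally, ``$C_{\mathcal{R}}$ is represented by $\mathscr{U}$'' translates, via $(p,q)\in\mathcal{R}\iff p-q\in C_{\mathcal{R}}$ and linearity of the duality, into statement \eqref{eq:representationdecisiontheory}, exactly as in Corollary~\ref{cor:firstresultok}; this handles both directions simultaneously and in fact subsumes the ``reflexive + independence'' half, since a family $\mathscr{U}$ as in \eqref{eq:representationdecisiontheory} forces $\mathcal{R}$ to be reflexive and to satisfy independence by Corollary~\ref{cor:firstresultok}.

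So the skeleton is: (1) identify the dual pair and the cone $C_{\mathcal{R}}$; (2) prove transitivity $\iff$ convexity of $C_{\mathcal{R}}$; (3) prove $\mathscr{G}=\mathscr{U}_0$ via the spanning property of $\Delta(Z)$ in $\mathrm{ca}(Z)$; (4) quote Theorem~\ref{thm:convexcones}\ref{item1:convexity}$\Leftrightarrow$\ref{item3:convexity} and rewrite ``$C_{\mathcal{R}}$ represented by $\mathscr{U}$'' as \eqref{eq:representationdecisiontheory} using Corollary~\ref{cor:firstresultok}'s argument. The main obstacle, and the only place that needs genuine care rather than bookkeeping, is step (3): one must check that \emph{every} nonzero signed measure in $\mathrm{ca}(Z)$ is a strictly positive multiple of a difference $p-q$ of two Borel probability measures on $Z$ — equivalently that no directions in $X\setminus\{0\}$ are ``missed'' by $\mathscr{U}_0$ — which is where the separability of $Z$ (ensuring $\mathrm{ca}(Z)=\mathrm{span}(\Delta(Z))$ with the right duality, via the Affine Representation Lemma) and the Jordan decomposition of signed measures are used. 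A secondary subtlety is making sure the equivalence between ``$C$ convex'' and condition \ref{item3:convexity} of Theorem~\ref{thm:convexcones} is being applied with $\mathscr{G}$ correctly matched to $\mathscr{U}_0$ and with $\mathcal{P}(\cdot)$ meaning the power set relative to $Y=\mathrm{C}_b(Z)$, so that the quantifier ``$\mathcal{P}(U)\cap\mathscr{U}=\emptyset$'' reads as ``no $K\in\mathscr{U}$ is contained in $U$,'' matching the $\widehat{\mathscr{K}}$-machinery behind Theorems~\ref{thm:uniqueness} and~\ref{thm:convexcones}.
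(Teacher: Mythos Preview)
Your overall strategy is the paper's: set up the dual pair $(\mathrm{ca}(Z),\mathrm{C}_b(Z))$, pass to the Aumann cone $C_{\mathcal{R}}$, equate transitivity with convexity of $C_{\mathcal{R}}$ (the paper simply cites \cite[Lemma~A.2(d)]{MR3957335} for this, while you sketch it), and invoke Theorem~\ref{thm:convexcones}. That part is fine.

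The error is in your step~(3), which you flag as the main obstacle. The claim that every nonzero $\mu\in\mathrm{ca}(Z)$ is a positive scalar multiple of some $p-q$ with $p,q\in\Delta(Z)$ is false: any such $\lambda(p-q)$ has total mass $0$, while e.g.\ a Dirac $\delta_z$ does not. What the Jordan decomposition actually gives is $\mathscr{U}_0\setminus\{\emptyset\}=\{G_\mu:\mu\neq 0,\ \mu(Z)=0\}$, a \emph{proper} subfamily of $\mathscr{G}$. This breaks your plan of quoting Theorem~\ref{thm:convexcones}\ref{item1:convexity}$\Leftrightarrow$\ref{item3:convexity} verbatim with $\mathscr{G}$ replaced by $\mathscr{U}_0$. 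The fix is easy but asymmetric. For the \textsc{Only If} direction, \ref{item1:convexity}$\Rightarrow$\ref{item3:convexity} yields a $\mathscr{K}$ satisfying the implication over all of $\mathscr{G}$, hence in particular over $\mathscr{U}_0\subseteq\mathscr{G}\cup\{\emptyset\}$ (the $\emptyset$ case is vacuous since $\mathscr{U}$ consists of nonempty sets). For the \textsc{If} direction you cannot invoke \ref{item3:convexity}$\Rightarrow$\ref{item1:convexity} directly, because the hypothesis is only assumed over $\mathscr{U}_0$; instead argue transitivity by hand: given $(p,q),(q,r)\in\mathcal{R}$, the sets $G_{p-q},G_{q-r},G_{p-r}$ all lie in $\mathscr{U}_0$, one has $G_{p-r}\subseteq G_{p-q}\cup G_{q-r}$, and your own translation ``$\mathcal{P}(G_a)\cap\mathscr{U}=\emptyset\iff a\in C_{\mathscr{U}}\iff (p,q)\in\mathcal{R}$'' finishes it. (Equivalently, note $C_{\mathcal{R}}\subseteq\{\mu:\mu(Z)=0\}$, so by Lemma~\ref{lem:inclusionevilcone} the only triples $H,K,J\in\mathscr{G}$ for which the hypothesis of the implication in \ref{item3:convexity} is non-vacuous already lie in $\mathscr{U}_0$.)
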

\begin{proof}
The proof goes analogously as in Corollary \ref{cor:firstresultok}, taking into account Theorem \ref{thm:convexcones} and that $C_{\mathcal{R}}$ is convex by \cite[Lemma A.2(d)]{MR3957335}.
\end{proof}

Several additional characterizations in \cite{MR3957335} can be obtained analogously. 
In this work, the 
advantage is that our results can be applied to different settings. 
For instance, in \cite[OP4]{MR3957335} the authors leave as open question to extend their analysis \textquotedblleft to the context of uncertainty, where one would instead take the Anscombe–Aumann expected utility theorem as the starting point. [...] Nothing is known at present about the structure of affine and monotonic (and state independent) preferences over (finite) acts (on a finite state space) that fail to satisfy continuity, completeness, and/or transitivity.\textquotedblright\ 

A first attempt to solve this question follows below. 
%
Following e.g. \cite{MR3030771, MR2291506, MR2977438}, 
given a nonempty finite set $\Omega$ and a separable metric space $Z$, we denote by 
$$
\mathscr{F}(\Omega,Z):=\Delta(Z)^\Omega
$$
the set of functions $f: \Omega \to \Delta(Z)$, which are called \emph{Anscombe--Aumann acts}. 
As usual, we endow $\mathscr{F}(\Omega,Z)$ with the product topology with the relative topology on $\Delta(Z)$ induced by the total variation norm on $\mathrm{ca}(Z)$. 

\begin{cor}\label{cor:AnscombeAumann}
Let $\mathcal{R}$ be a binary relation on the set of Anscombe--Aumann acts $\mathscr{F}(\Omega,Z)$, where $\Omega$ is a nonempty finite set and $Z$ is a separable metric space. 

Then $\mathcal{R}$ is reflexive and satisfies the indipendence axiom \eqref{eq:stronindependence} 
if and only if there exists a nonempty family $\mathscr{K}\subseteq \mathcal{P}(\mathrm{C}_b(Z\times \Omega))$ of nonempty subsets such that, for all $f,g \in \mathscr{F}(\Omega,Z)$, it holds $(f,g) \in \mathcal{R}$ if and only if 
\begin{displaymath}
\forall K \in \mathscr{K}, \exists u \in K, \quad  
\sum\nolimits_\omega\mathbf{E}_{f(\omega)}[u(\cdot,\omega)]
\ge \sum\nolimits_\omega\mathbf{E}_{g(\omega)}[u(\cdot,\omega)].
\end{displaymath}
\end{cor}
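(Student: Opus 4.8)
The plan is to recast the statement about Anscombe--Aumann acts into the abstract cone framework of Example~\ref{example:aumanncone} and then apply Theorem~\ref{thm:dualconesgeneralized} verbatim, exactly as in the proof of Corollary~\ref{cor:firstresultok}. First I would set $S:=\mathscr{F}(\Omega,Z)$; since $\Omega$ is finite, $S=\Delta(Z)^\Omega$ is a convex subset of the vector space $\mathrm{ca}(Z)^\Omega$, and I would let $X:=\mathrm{span}(S)=\mathrm{ca}(Z)^\Omega$ together with the affine map $f:S^2\to X$, $f(h_1,h_2):=h_1-h_2$. By Example~\ref{example:aumanncone}, a reflexive binary relation $\mathcal{R}$ on $S$ satisfying the independence axiom \eqref{eq:stronindependence} is exactly one of the form $(h_1,h_2)\in\mathcal{R}\iff h_1-h_2\in C_{\mathcal{R}}$ for the pointed Aumann cone $C_{\mathcal{R}}\subseteq X$, and conversely every such representation yields a reflexive relation satisfying independence. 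The \textsc{If} direction is then the routine check (as in Corollary~\ref{cor:firstresultok}): a relation admitting the displayed representation is reflexive because $\sum_\omega \mathbf{E}_{f(\omega)}[u(\cdot,\omega)]\ge \sum_\omega\mathbf{E}_{f(\omega)}[u(\cdot,\omega)]$ trivially, and it satisfies independence because the functional $(h_1,h_2)\mapsto \sum_\omega\big(\mathbf{E}_{h_1(\omega)}[u(\cdot,\omega)]-\mathbf{E}_{h_2(\omega)}[u(\cdot,\omega)]\big)$ is affine and vanishes on the ``common-mixture'' part.

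The only real content is identifying the dual pair correctly so that the abstract pairing $\langle\cdot,\cdot\rangle$ becomes the explicit expression $\sum_\omega\mathbf{E}_{\mu(\omega)}[u(\cdot,\omega)]$ appearing in the statement. Here I would argue that $(\mathrm{ca}(Z)^\Omega,\ \mathrm{C}_b(Z\times\Omega))$ is a dual pair under $\langle \mu,u\rangle:=\sum_{\omega\in\Omega}\int_Z u(z,\omega)\,\mathrm{d}\mu(\omega)(z)$. Since $\Omega$ is finite, $\mathrm{C}_b(Z\times\Omega)\cong \mathrm{C}_b(Z)^\Omega$ and $\mathrm{ca}(Z)^\Omega$ is a finite product, so the separation properties follow coordinatewise from the Affine Representation Lemma used in Corollary~\ref{cor:firstresultok} (which gives that $(\mathrm{ca}(Z),\mathrm{C}_b(Z))$ is a dual pair under integration, because $Z$ is a separable metric space). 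One must also check that the topological dual of $\mathscr{F}(\Omega,Z)$, with the product topology induced by total variation on each factor, is the expected space; again finiteness of $\Omega$ reduces this to the single-factor statement invoked in \cite{MR3957335}.

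With the dual pair in hand, the proof of the \textsc{Only If} direction is three lines: by Example~\ref{example:aumanncone} the relation is represented by the pointed cone $C_{\mathcal{R}}\subseteq X$; by Theorem~\ref{thm:dualconesgeneralized}, $C_{\mathcal{R}}=C_{\mathscr{K}(C_{\mathcal{R}})}$, so there is a nonempty family $\mathscr{K}\subseteq\mathcal{P}(\mathrm{C}_b(Z\times\Omega))$ of nonempty sets with $x\in C_{\mathcal{R}}\iff \forall K\in\mathscr{K},\exists u\in K,\ \langle x,u\rangle\ge0$; and substituting $x=f-g$ and expanding $\langle f-g,u\rangle$ by linearity gives $\sum_\omega\mathbf{E}_{f(\omega)}[u(\cdot,\omega)]-\sum_\omega\mathbf{E}_{g(\omega)}[u(\cdot,\omega)]\ge0$, which is the asserted inequality. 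I expect the main obstacle — such as it is — to be the bookkeeping in verifying the dual-pair claim and the identification of $\mathscr{F}(\Omega,Z)^\prime$ with $\mathrm{C}_b(Z\times\Omega)$ up to isomorphism; everything else is a direct transcription of the argument already given for Corollary~\ref{cor:firstresultok}, with $\mathrm{ca}(Z)$ replaced by its $\Omega$-fold product.
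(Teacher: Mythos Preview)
Your proposal is correct and follows essentially the same route as the paper: set $X=\mathrm{ca}(Z)^\Omega$, identify the dual pair $(\mathrm{ca}(Z)^\Omega,\mathrm{C}_b(Z\times\Omega))$ with duality $\langle\mu,u\rangle=\sum_\omega\mathbf{E}_{\mu(\omega)}[u(\cdot,\omega)]$, and then transcribe the proof of Corollary~\ref{cor:firstresultok}. The only cosmetic difference is that the paper cites a standard reference for the isomorphism $(\mathrm{ca}(Z)^\Omega)^\prime\cong\mathrm{C}_b(Z\times\Omega)$, whereas you sketch it coordinatewise via $\mathrm{C}_b(Z\times\Omega)\cong\mathrm{C}_b(Z)^\Omega$; also, your phrase ``the topological dual of $\mathscr{F}(\Omega,Z)$'' is a slip, since $\mathscr{F}(\Omega,Z)$ is not a vector space---you mean the dual of $X=\mathrm{ca}(Z)^\Omega$.
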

\begin{proof}
The proof goes on the same lines of Corollary \ref{cor:firstresultok},  replacing $\Delta(Z)$ with $\Delta(Z)^\Omega$ and, similarly, $X$ with $\mathrm{ca}(Z)^\Omega$. 
Since $\mathrm{ca}(Z)$ is a normed space and $\Omega$ is finite, $X^\prime$ is isometrically isomorphic to $\mathrm{C}_b(Z\times \Omega)$ by \cite[p.77, Exercise 4]{MR1070713}. 
Hence we have the dual pair $(\mathrm{ca}(Z)^\Omega, \mathrm{C}_b(Z\times \Omega))$, with duality map given by
\begin{displaymath}
\langle \mu ,u\rangle
=\sum\nolimits_{\omega}\mathbf{E}_{\mu(\omega)}[u(\cdot, \omega)].
\end{displaymath}
The conclusion follows analogously. 
\end{proof}

As a further application, given a preference relation $\mathcal{R}$ over $\ell_{\infty}$, we provide two representations of the $\mathcal{R}$-positive cone which are related to 
\cite[Lemma 6]{MR3843491}. 
\begin{cor}\label{cor:ell_infty}
Let $\mathcal{R}$ be a binary relation of $\ell_\infty$ which satisfies\emph{:}
\begin{enumerate}[label={\rm (\roman{*})}]
\item \label{item:ellinfty1} \emph{positive homogeneity}, i.e., for all $x,y \in \ell_\infty$ and $\alpha \ge 0$, $(x,y) \in \mathcal{R}$ implies $(\alpha x,\alpha y) \in \mathcal{R}$\emph{;}
\item \label{item:ellinfty2} \emph{weak continuity}, i.e., for all $y \in \ell_\infty$, the 
sets $\mathcal{U}_y:=\{x \in \ell_\infty: (x,y) \in \mathcal{R}\}$ and $\mathcal{L}_y:=\{x \in \ell_\infty: (y,x) \in \mathcal{R}\}$ are weakly closed.
\end{enumerate}
Then there exists a nonempty family $\mathscr{K}\subseteq \mathcal{P}(\mathrm{ba}(\mathbf{N}))$\footnote{Recall that $\mathrm{ba}(\mathbf{N})$ is the family of signed finite additive measures $\mu: \mathcal{P}(\mathbf{N}) \to \mathbf{R}$ of bounded variation.} of nonempty compact convex subsets such that 
$$
\mathcal{U}_0=\left\{x\in \ell_\infty: \forall K \in \mathscr{K}, \exists \mu \in K, \,\, \int x\,\mathrm{d}\mu \ge 0\right\}.
$$
If, in addition, $\mathcal{R}$ is\emph{:}
\begin{enumerate}[label={\rm (\roman{*})}]
\setcounter{enumi}{2}
\item \label{item:ellinfty3} \emph{convex}, i.e., for all $\alpha \in \mathbf{R}$ and $x \in \ell_\infty$, the set $\{x \in \ell_\infty: (x,(\alpha,\alpha,\ldots)) \in \mathcal{R}\}$ is convex\emph{,}
\end{enumerate}
then there exists a nonempty compact convex $K\subseteq \mathrm{ba}(\mathbf{N})$ such that 
$$
\mathcal{U}_0=\left\{x\in \ell_\infty:  \forall \mu \in K, \,\, \int x\,\mathrm{d}\mu \ge 0\right\}.
$$
\end{cor}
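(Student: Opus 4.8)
The plan is to recognize $\mathrm{ba}(\mathbf{N})$ as the topological dual of $\ell_\infty$, so that the pair $(\ell_\infty, \mathrm{ba}(\mathbf{N}))$ is a dual pair with duality $\langle x,\mu\rangle = \int x\,\mathrm{d}\mu$; the weak topology $\sigma(\ell_\infty,\mathrm{ba}(\mathbf{N}))$ is then the usual weak topology on $\ell_\infty$, and $\sigma(\mathrm{ba}(\mathbf{N}),\ell_\infty)$ is the weak$^\star$ topology on $\mathrm{ba}(\mathbf{N})$. With this identification in place, the strategy is to show that $\mathcal{U}_0$ (the $\mathcal{R}$-positive cone) is a cone with the structural and topological properties needed to invoke Theorem \ref{thm:closedcone} and, under the extra hypothesis, Proposition \ref{prop:openconvex} via Corollary \ref{cor:generatingclosedconvexinterior} — or more precisely, the half of Theorem \ref{thm:closedcone} that produces a representation by $\sigma(\mathrm{ba}(\mathbf{N}),\ell_\infty)$-compact convex sets, together with the convexity refinement.

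First I would check that $\mathcal{U}_0 := \mathcal{U}_{(0,0,\ldots)} = \{x \in \ell_\infty : (x,0) \in \mathcal{R}\}$ is a pointed cone: positive homogeneity \ref{item:ellinfty1} applied with $y = 0$ gives $\lambda \mathcal{U}_0 \subseteq \mathcal{U}_0$ for all $\lambda > 0$, and one has to note that $0 \in \mathcal{U}_0$ — this should follow either from reflexivity-type considerations built into the way $\mathcal{R}$ is posited or, if not assumed, one works instead with $\mathcal{U}_0 \cup \{0\}$ as in the remark after Definition \ref{defi:base} that pointedness is not restrictive. Next, weak continuity \ref{item:ellinfty2} with $y = 0$ says exactly that $\mathcal{U}_0$ is weakly closed, i.e.\ $\sigma(\ell_\infty,\mathrm{ba}(\mathbf{N}))$-closed; but since the weak and norm closures of a convex set coincide and, more to the point, Theorem \ref{thm:closedcone} is stated for norm-closed cones in a Banach space with the dual-pair conventions — here I should be slightly careful. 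The cleanest route is: $\mathcal{U}_0$ is a norm-closed cone in the Banach space $\ell_\infty$ (weakly closed $\Rightarrow$ norm closed), so by the implication \ref{item:closed1} $\implies$ \ref{item:closed3} of Theorem \ref{thm:closedcone}, $\mathcal{U}_0$ is represented by a family $\mathscr{K} \subseteq \mathcal{P}(\mathrm{ba}(\mathbf{N}))$ of nonempty $\sigma(\mathrm{ba}(\mathbf{N}),\ell_\infty)$-compact convex sets, which unwinds to precisely the displayed formula for $\mathcal{U}_0$.

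For the second assertion, under the additional convexity hypothesis \ref{item:ellinfty3}, the set $\{x \in \ell_\infty : (x,0)\in\mathcal{R}\} = \mathcal{U}_0$ is convex (taking $\alpha = 0$ in \ref{item:ellinfty3}), so $\mathcal{U}_0$ is a norm-closed convex cone. I would then want to produce a single compact convex $K$ with $\mathcal{U}_0 = \{x : \forall \mu \in K,\ \int x\,\mathrm{d}\mu \ge 0\}$. The honest obstacle here is that $\mathcal{U}_0 = (\mathcal{U}_0)^{\prime\prime}$ via \eqref{eq:CCprimeprime} expresses $\mathcal{U}_0$ through its dual cone $(\mathcal{U}_0)^\prime$, but $(\mathcal{U}_0)^\prime$ need not be weak$^\star$-compact — a closed convex cone is weak$^\star$-compact only if it is $\{0\}$. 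So a genuine single-compact-set representation of a closed convex cone by a \emph{weak$^\star$-compact} $K$ cannot follow from \eqref{eq:CCprimeprime} directly; one must intersect with a bounded slice. Concretely, one should take $K := \{\mu \in (\mathcal{U}_0)^\prime : \|\mu\| \le 1\}$ (the unit ball of the dual cone), which is weak$^\star$-closed and norm-bounded, hence $\sigma(\mathrm{ba}(\mathbf{N}),\ell_\infty)$-compact by Alaoglu--Bourbaki, and convex; and then verify that for $x \in \ell_\infty$, "$\int x\,\mathrm{d}\mu \ge 0$ for all $\mu \in K$" is equivalent to "$\int x\,\mathrm{d}\mu \ge 0$ for all $\mu \in (\mathcal{U}_0)^\prime$" — which holds because $(\mathcal{U}_0)^\prime = \mathrm{cone}(K)$ (every functional in the cone is a nonnegative multiple of one of norm $\le 1$, using $0 \in K$), and then \eqref{eq:CCprimeprime} gives $\mathcal{U}_0 = \{x : \forall \mu \in (\mathcal{U}_0)^\prime,\ \langle x,\mu\rangle \ge 0\} = \{x : \forall \mu \in K,\ \int x\,\mathrm{d}\mu \ge 0\}$, as required. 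The main point requiring care is thus this passage from the (non-compact) dual cone to its compact unit ball while preserving the representation; everything else is routine verification of the cone axioms and translation of the hypotheses through the $(\ell_\infty, \mathrm{ba}(\mathbf{N}))$ duality.
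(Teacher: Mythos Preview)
Your proposal is correct and follows essentially the same route as the paper: verify that $\mathcal{U}_0$ is a pointed norm-closed cone in the Banach space $\ell_\infty$ (with dual $\mathrm{ba}(\mathbf{N})$) and invoke Theorem~\ref{thm:closedcone} for the first part, then use the bipolar identity \eqref{eq:CCprimeprime}/Theorem~\ref{thm:dualdualcone} for the convex case. Two small remarks: for pointedness you can argue directly as the paper does --- positive homogeneity \ref{item:ellinfty1} with $\alpha=0$ forces $(0,0)\in\mathcal{R}$ (assuming $\mathcal{R}\neq\emptyset$), so $0\in\mathcal{U}_0$ without any ad hoc adjustment; and for the second part the paper's proof simply cites Theorem~\ref{thm:dualdualcone}, so your explicit passage from the dual cone $(\mathcal{U}_0)'$ to its intersection with the unit ball (weak$^\star$-compact by Alaoglu, and generating the same cone) is exactly the detail one has to supply to extract a genuinely compact convex $K$ from that citation.
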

\begin{proof}
If $\mathcal{R}$ satisfies \ref{item:ellinfty1} and \ref{item:ellinfty2}, then $(0,0) \in \mathcal{R}$, hence $\mathcal{U}_0$ is a closed pointed cone. The claim follows that Theorem \ref{thm:closedcone}, Remark \ref{rmk:closedconvexforclosedcones_Banach}, and the fact that $\ell_\infty^\prime=\mathrm{ba}(\mathbf{N})$. The second part follows similarly, using Theorem \ref{thm:dualdualcone}. 
\end{proof}



\bibliographystyle{amsplain}
\bibliography{utility}

\providecommand{\MR}[1]{}
\providecommand{\bysame}{\leavevmode\hbox to3em{\hrulefill}\thinspace}
\providecommand{\MR}{\relax\ifhmode\unskip\space\fi MR }
\providecommand{\MRhref}[2]{%
  \href{http://www.ams.org/mathscinet-getitem?mr=#1}{#2}
}
\providecommand{\href}[2]{#2}
\begin{thebibliography}{10}

\bibitem{MR2754997}
M.~D. Acosta and V.~Kadets, \emph{A characterization of reflexive spaces},
  Math. Ann. \textbf{349} (2011), no.~3, 577--588. \MR{2754997}

\bibitem{MR2378491}
C.~D. Aliprantis and K.~C. Border, \emph{Infinite dimensional analysis}, third
  ed., Springer, Berlin, 2006, A hitchhiker's guide. \MR{2378491}

\bibitem{MR1075992}
C.~D. Aliprantis, D.~J. Brown, and O.~Burkinshaw, \emph{Existence and
  optimality of competitive equilibria}, Springer-Verlag, Berlin, 1990.
  \MR{1075992}

\bibitem{MR2262133}
C.~D. Aliprantis and O.~Burkinshaw, \emph{Positive operators}, Springer,
  Dordrecht, 2006, Reprint of the 1985 original. \MR{2262133}

\bibitem{MR2317344}
C.~D. Aliprantis and R.~Tourky, \emph{Cones and duality}, Graduate Studies in
  Mathematics, vol.~84, American Mathematical Society, Providence, RI, 2007.
  \MR{2317344}

\bibitem{MR174381}
R.~J. Aumann, \emph{Utility theory without the completeness axiom},
  Econometrica \textbf{30} (1962), no.~3, 445--462.

\bibitem{MR3822757}
P.~Carr and Q.~J. Zhu, \emph{Convex duality and financial mathematics},
  SpringerBriefs in Mathematics, Springer, Cham, 2018. \MR{3822757}

\bibitem{MR2577803}
E.~Casini and E.~Miglierina, \emph{Cones with bounded and unbounded bases and
  reflexivity}, Nonlinear Anal. \textbf{72} (2010), no.~5, 2356--2366.
  \MR{2577803}

\bibitem{MR3843491}
C.~P. Chambers and F.~Echenique, \emph{On multiple discount rates},
  Econometrica \textbf{86} (2018), no.~4, 1325--1346. \MR{3843491}

\bibitem{MR4223027}
L.~Chiantini and J.~Migliore, \emph{Sets of points which project to complete
  intersections, and unexpected cones}, Trans. Amer. Math. Soc. \textbf{374}
  (2021), no.~4, 2581--2607, With an appendix by A. Bernardi, L. Chiantini, G.
  Dedham, G. Favacchio, B. Harbourne, J.Migliore, T. Szemberg and J. Szpond.
  \MR{4223027}

\bibitem{MR1070713}
J.~B. Conway, \emph{A course in functional analysis}, second ed., Graduate
  Texts in Mathematics, vol.~96, Springer-Verlag, New York, 1990. \MR{1070713}

\bibitem{Dubra_et_al}
J.~Dubra, F.~Maccheroni, and E.~A. Ok, \emph{Expected utility theory without
  the completeness axiom}, J. Econom. Theory \textbf{115} (2004), no.~1,
  118--133. \MR{2036107}

\bibitem{MR2398816}
\"{O}. Evren, \emph{On the existence of expected multi-utility
  representations}, Econom. Theory \textbf{35} (2008), no.~3, 575--592.
  \MR{2398816}

\bibitem{MR3182925}
\bysame, \emph{Scalarization methods and expected multi-utility
  representations}, J. Econom. Theory \textbf{151} (2014), 30--63. \MR{3182925}

\bibitem{MR3030771}
T.~Galaabaatar and E.~Karni, \emph{Subjective expected utility with incomplete
  preferences}, Econometrica \textbf{81} (2013), no.~1, 255--284. \MR{3030771}

\bibitem{MR3365863}
F.~Garc\'{\i}a Casta\~{n}o, M.~A. Melguizo~Padial, and V.~Montesinos, \emph{On
  geometry of cones and some applications}, J. Math. Anal. Appl. \textbf{431}
  (2015), no.~2, 1178--1189. \MR{3365863}

\bibitem{MR1994718}
A.~G\"{o}pfert, H.~Riahi, C.~Tammer, and C.~Z\u{a}linescu, \emph{Variational
  methods in partially ordered spaces}, CMS Books in Mathematics/Ouvrages de
  Math\'{e}matiques de la SMC, vol.~17, Springer-Verlag, New York, 2003.
  \MR{1994718}

\bibitem{MR4151084}
L.~Guth, H.~Wang, and R.~Zhang, \emph{A sharp square function estimate for the
  cone in {$\Bbb {R}^3$}}, Ann. of Math. (2) \textbf{192} (2020), no.~2,
  551--581. \MR{4151084}

\bibitem{MR3957335}
K.~Hara, E.~A. Ok, and G.~Riella, \emph{Coalitional expected multi-utility
  theory}, Econometrica \textbf{87} (2019), no.~3, 933--980. \MR{3957335}

\bibitem{MR52045}
M.~Hausner and J.~G. Wendel, \emph{Ordered vector spaces}, Proc. Amer. Math.
  Soc. \textbf{3} (1952), 977--982. \MR{52045}

\bibitem{MR1666566}
M.~Kerckhove and G.~Lawlor, \emph{A family of stratified area-minimizing
  cones}, Duke Math. J. \textbf{96} (1999), no.~2, 401--424. \MR{1666566}

\bibitem{MR1116766}
D.~T. L\d{u}c, \emph{Theory of vector optimization}, Lecture Notes in Economics
  and Mathematical Systems, vol. 319, Springer-Verlag, Berlin, 1989.
  \MR{1116766}

\bibitem{MR2888839}
E.~Lehrer and R.~Teper, \emph{Justifiable preferences}, J. Econom. Theory
  \textbf{146} (2011), no.~2, 762--774. \MR{2888839}

\bibitem{MegginsonBook}
R.~E. Megginson, \emph{An introduction to {B}anach space theory}, Graduate
  Texts in Mathematics, vol. 183, Springer-Verlag, New York, 1998. \MR{1650235}

\bibitem{MR0171152}
D.~P. Milman and V.~D. Milman, \emph{Some properties of non-reflexive {B}anach
  spaces}, Mat. Sb. (N.S.) \textbf{65 (107)} (1964), 486--497. \MR{0171152}

\bibitem{MR2291506}
R.~Nau, \emph{The shape of incomplete preferences}, Ann. Statist. \textbf{34}
  (2006), no.~5, 2430--2448. \MR{2291506}

\bibitem{MR3566440}
H.~Nishimura and E.~A. Ok, \emph{Utility representation of an incomplete and
  nontransitive preference relation}, J. Econom. Theory \textbf{166} (2016),
  164--185. \MR{3566440}

\bibitem{MR2977438}
E.~A. Ok, P.~Ortoleva, and G.~Riella, \emph{Incomplete preferences under
  uncertainty: indecisiveness in beliefs versus tastes}, Econometrica
  \textbf{80} (2012), no.~4, 1791--1808. \MR{2977438}

\bibitem{MR2861605}
I.~A. Polyrakis and F.~Xanthos, \emph{Cone characterization of {G}rothendieck
  spaces and {B}anach spaces containing {$c_0$}}, Positivity \textbf{15}
  (2011), no.~4, 677--693. \MR{2861605}

\bibitem{MR4273125}
Y.~Shitov, \emph{Linear mappings preserving the copositive cone}, Proc. Amer.
  Math. Soc. \textbf{149} (2021), no.~8, 3173--3176. \MR{4273125}

\end{thebibliography}


\end{document}